
\documentclass[12pt, a4paper, leqno]{amsart}

\usepackage{amsmath}
\usepackage{amsfonts}
\usepackage{amssymb}
\usepackage{txfonts}
\usepackage{comment}
\usepackage{ae}
\usepackage{graphicx}
 \usepackage[colorlinks]{hyperref}

\newcommand{\R}{\varmathbb{R}}
\newcommand{\Z}{\varmathbb{Z}}
\newcommand{\N}{\varmathbb{N}}
\newcommand{\Rn}{\varmathbb{R}^n}

\newcommand{\C}{\mathcal{C}}

\def\b{\qopname\relax o{b}}
\def\diam{\qopname\relax o{diam}}

\def\dist{\qopname\relax o{dist}}
\def\min{\qopname\relax o{min}}
\def\diam{\qopname\relax o{diam}}


\usepackage{amsthm}

\swapnumbers
\theoremstyle{plain}
\newtheorem{theorem}[equation]{Theorem}
\newtheorem{lemma}[equation]{Lemma}
\newtheorem{prop}[equation]{Proposition}
\newtheorem{corollary}[equation]{Corollary}

\theoremstyle{definition}
\newtheorem{definition}[equation]{Definition}

\theoremstyle{remark}
\newtheorem{remark}[equation]{Remark}

\addtolength{\voffset}{-0.05cm}
\addtolength{\textheight}{0.1cm}

\numberwithin{equation}{section}

\pagestyle{headings}

\title{On fractional Poincar\'e inequalities}
\author{Ritva Hurri-Syrj\"anen and Antti V. V\"ah\"akangas}
\address{Department of Mathematics and Statistics, 
Gustaf H\"allstr\"omin katu 2$\b$, FI-00014 University of Helsinki, Finland}
\email{ritva.hurri-syrjanen@helsinki.fi}
\email{antti.vahakangas@helsinki.fi}
\thanks{A.V.V.
was supported by the Academy of Finland, grants 75166001 and 1134757.
}
\date{\today}

\begin{document}

\keywords{fractional Poincar\'e inequality, fractional Sobolev--Poincar\'e inequality, uniform domain, $s$-John domain, porous set}
\subjclass[2010]{46E35 (26D10)}

\begin{abstract}
We show that fractional $(p,p)$-Poincar\'e inequalities and even fractional Sobolev--Poincar\'e  inequalities hold for
bounded John domains, and especially for bounded Lipschitz domains. We also prove
sharp fractional $(1,p)$-Poincar\'e inequalities for $s$-John domains.
\end{abstract}

\maketitle

\markboth{\textsc{R. Hurri-Syrj\"anen and A. V. V\"ah\"akangas}}
{\textsc{On fractional Poincar\'e inequalities}}

\section{Introduction}

We consider the following fractional $(q,p)$-Poincar\'e inequality  in a bounded domain $G$ in $\Rn\,,$
$n\geq 2\,,$ 
\begin{equation}\label{fractionalqp}
\int_G\vert u(x)-u_G\vert ^q\,dx
\le
c
\biggl(\int_G\int_{G\cap B^n(x,\tau \dist(x,\partial G)
)}\frac{\vert u(x)-u(y)\vert ^p}{\vert x-y\vert ^{n+\delta p}}\,dy\,dx
\biggr)^{q/p}\,,
\end{equation}
where $1\le p,q <\infty$,
$\delta,\tau \in (0,1)$, and
the constant $c$ does not depend on $u\in L^p(G)$.
Our inequality \eqref{fractionalqp} with $q=p$ is stronger than 
the fractional inequality
\begin{equation}\label{fractionalpp}
\int_G\vert u(x)-u_G\vert ^p\,dx
\le
c
\int_G\int_G\frac{\vert u(x)-u(y)\vert ^p}{\vert x-y\vert ^{n+\delta p}}\,dx\,dy\,,
\end{equation}
where on the right hand side is the commonly used seminorm  on $W^{\delta,p}(G)$, \cite{A}.
Augusto C. Ponce showed that bounded Lipschitz domains support
the same type of inequalities 
as \eqref{fractionalpp} but with general radial weights, \cite{P1}, \cite[Theorem 1.1]{P2}.
Jean Bourgain, Ha\"im Brezis, and Petru Mironescu 
found the optimal constant $c$
in \eqref{fractionalpp} when $G$ is a cube \cite[Theorem 1]{BBM2}. 
An elementary proof was provided by Vladimir Maz'ya and Tatyana Shaposhnikova, \cite{MS1},
\cite{MS2}.
The relationship between the right hand side of \eqref{fractionalpp} and the $L^p(G)$ integrability
of the absolute value of the gradient in smooth bounded domains is considered 
in \cite{BBM1}.

We give sufficient geometric conditions for a bounded domain $G$ in $\R^n$ to
support the fractional $(q,p)$-Poincar\'e inequality for $1\le q\le p<\infty$, Theorem \ref{thmP}. Examples of the domains which support the
fractional $(p,p)$-Poincar\'e inequality are
John domains, Theorem \ref{pp_John}. The John domains
include uniform domains and hence also Lipschitz domains.
We show that John domains
support the fractional Poincar\'e inequality
\eqref{fractionalqp} when $1<p\le q\le np/(n-\delta p)\,$ and $p<n/\delta\,,$ Theorem 
\ref{fractional_sobolev_poincare}.
We also study more general bounded domains, 
so called $s$-John domains with $s>1$. We prove
fractional $(1,p)$-Poincar\'e inequalities for
these domains, Theorem \ref{sharp}, and we show that these results are sharp, Theorem \ref{1p_counter}.

\section{Notation and auxiliary results}

We assume that $G$ is a bounded domain in Euclidean $n$-space
$\Rn$, $n\geq 2$, throughout the paper. 

We denote by
$\mathcal{D}$ the family of closed dyadic cubes in $\R^n$. We let $\mathcal{D}_j$ be
the family of those dyadic cubes whose
side length is $2^{-j}$, $j\in\Z$.
For a domain $G$ we fix its Whitney decomposition $W=W_G\subset\mathcal{D}$. 
For the properties of dyadic cubes and Whitney cubes we refer to Elias M. Stein's book,
\cite{S}.  We write $Q^*=\frac{9}{8}Q$ for $Q\in W$. Then,
\begin{equation}\label{dist_est}
\frac{3}{4}\diam(Q)\le \dist(x,\partial G)\le 6 \diam(Q)\,,\quad \text{ if }x\in Q^*.
\end{equation}

Let us fix a cube $Q_0$ in the Whitney decomposition $W$.
For each $Q\in W$
there exists a chain of cubes 
$(Q_0^*,Q_1^*,\cdots \,,Q_k^*)=:\C(Q^*)$
joining
two cubes $Q_0^*$ and $Q_k^*=Q^*$
such that
$
Q_i^*\cap Q_j^*\neq\emptyset\,
$
if and only if $\vert i-j\vert\le 1$.
The length of  this chain is written as
$\ell(\C(Q^*)):=k$.
Once the chains of cubes have been
picked up, then for each Whitney cube $A$ we define  a set
$A(W)=\{Q\in W\mid A^*\in \C(Q^*)\}$.
We call this construction a chain decomposition of $G$
with a fixed cube $Q_0$.

The side length of a cube $Q$ in $\R^n$ is denoted by $\ell (Q)$.
We write $\chi_E$ for the characteristic function of a set $E$.
The Lebesque $n$-measure of a  measurable set $E$ is denoted by $\vert E\vert.$
The upper Minkowski dimension of a set $E$ in $\R^n$ is
\[
\dim_\mathcal{M}(E):=\sup \big\{\lambda\ge 0\mid \limsup_{r\to 0+} \mathcal{M}_\lambda(E,r)=\infty\big\},
\]
where
\[
\mathcal{M}_\lambda(E,r):=\frac{|E+B^n(0,r)|}{r^{n-\lambda}}=\frac{|\cup_{x\in E}B^n(x,r)|}{r^{n-\lambda}},\quad r> 0,
\]
is the $\lambda$-dimensional Minkowski precontent.

The notation $a\lesssim b$ is used to express that an estimate $a\le cb$ holds for some constant $c>0$
whose value is clear from the context. 
We use subscripts to indicate the dependence on parameters, for example,
a quantity $c_{\lambda}$ depends on a parameter $\lambda$.

The following lemma gives a fractional inequality in a cube.

\begin{lemma}\label{inequality_cube}
Let $Q$ be a closed cube in $\Rn$. Let $1\le q\le p <\infty $
and let $\delta,\rho \in (0,1)\,.$
Then, there is a constant $c<\infty $ independent of $u\in L^p(Q)$ such that
\begin{equation*}
\begin{split}
&\frac{1}{|Q|} \int_{Q} |u(y)-u_{Q}|^q\,dy 
\\& \le c|Q|^{q(\delta /n-1/p)}\bigg(\int_{Q} \int_{Q\cap B^n(y,\,\rho \ell(Q))} \frac{|u(y)-u(z)|^p}{|z-y|^{n+\delta p}}\,dz\,dy\bigg)^{q/p}\,.
\end{split}
\end{equation*}
\end{lemma}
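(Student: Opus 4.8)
The plan is, first, to reduce to the case $q=p$. Since $p/q\ge1$, Jensen's inequality for the probability measure $\vert Q\vert^{-1}\,dy$ on $Q$ gives
\[
\frac1{\vert Q\vert}\int_Q\vert u-u_Q\vert^q\,dy\le\biggl(\frac1{\vert Q\vert}\int_Q\vert u-u_Q\vert^p\,dy\biggr)^{q/p},
\]
so raising the $q=p$ estimate to the power $q/p$ reproduces exactly the factor $\vert Q\vert^{q(\delta/n-1/p)}$ asked for in general. Hence it suffices to prove
\[
\frac1{\vert Q\vert}\int_Q\vert u-u_Q\vert^p\,dy\le c\,\vert Q\vert^{p(\delta/n-1/p)}I,\qquad I:=\int_Q\int_{Q\cap B^n(y,\,\rho\ell(Q))}\frac{\vert u(y)-u(z)\vert^p}{\vert z-y\vert^{n+\delta p}}\,dz\,dy,
\]
and we may assume $I<\infty$, the inequality being trivial otherwise.

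Next I would fix an integer $N=N(n,\rho)$ so large that $2\sqrt n\,\ell(Q)/N\le\rho\,\ell(Q)$, and partition $Q$ into $N^n$ congruent closed subcubes $Q_i$ of side length $\ell(Q)/N$. With this choice, whenever $y,z$ lie in a single subcube or in two subcubes sharing a face, $\vert z-y\vert\le 2\sqrt n\,\ell(Q)/N\le\rho\,\ell(Q)$; thus every product set $Q_i\times Q_i$, and every $R\times R'$ with $R,R'$ face-adjacent subcubes, is contained in $\{(y,z):y\in Q,\ z\in Q\cap B^n(y,\rho\ell(Q))\}$. Fixing one subcube $Q_0$, Jensen's inequality gives $\vert u_Q-u_{Q_0}\vert^p\le\vert Q\vert^{-1}\int_Q\vert u-u_{Q_0}\vert^p\,dy$, so by the triangle inequality for $\vert\,\cdot\,\vert^p$ it is enough to bound $\vert Q\vert^{-1}\int_Q\vert u-u_{Q_0}\vert^p\,dy$; splitting this integral over the $Q_i$ and inserting the averages $u_{Q_i}$ reduces the problem to estimating, for each $i$, the quantities $\vert Q_i\vert^{-1}\int_{Q_i}\vert u-u_{Q_i}\vert^p\,dy$ and $\vert u_{Q_i}-u_{Q_0}\vert^p$.

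Both are handled by the same device. Jensen's inequality turns $\vert Q_i\vert^{-1}\int_{Q_i}\vert u-u_{Q_i}\vert^p\,dy$ into the double integral $\vert Q_i\vert^{-2}\int_{Q_i}\int_{Q_i}\vert u(y)-u(z)\vert^p\,dz\,dy$, and on $Q_i\times Q_i$ one has $\vert z-y\vert\le 2\sqrt n\,\ell(Q)/N$, so the elementary pointwise bound $\vert u(y)-u(z)\vert^p\le(2\sqrt n\,\ell(Q)/N)^{n+\delta p}\vert u(y)-u(z)\vert^p\vert z-y\vert^{-n-\delta p}$ (valid there since $\vert z-y\vert\le 2\sqrt n\,\ell(Q)/N$) yields $\vert Q_i\vert^{-1}\int_{Q_i}\vert u-u_{Q_i}\vert^p\,dy\le\vert Q_i\vert^{-2}(2\sqrt n\,\ell(Q)/N)^{n+\delta p}I$. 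For $\vert u_{Q_i}-u_{Q_0}\vert^p$ I would join $Q_i$ to $Q_0$ by a chain of subcubes $Q_i=R_0,R_1,\dots,R_m=Q_0$ in which consecutive cubes share a face and $m\le nN$ (move one coordinate direction at a time); then $\vert u_{Q_i}-u_{Q_0}\vert^p\le m^{p-1}\sum_{l=0}^{m-1}\vert u_{R_l}-u_{R_{l+1}}\vert^p$, and the same Jensen-plus-pointwise-bound argument applied to each pair $R_l,R_{l+1}$ (using $\vert R_l\vert=\vert R_{l+1}\vert=\vert Q_i\vert$) gives $\vert u_{Q_i}-u_{Q_0}\vert^p\le(nN)^p\vert Q_i\vert^{-2}(2\sqrt n\,\ell(Q)/N)^{n+\delta p}I$. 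Summing these bounds over the $N^n$ subcubes and using $\vert Q_i\vert=\vert Q\vert/N^n=\ell(Q)^n/N^n$ yields $\vert Q\vert^{-1}\int_Q\vert u-u_{Q_0}\vert^p\,dy\le c(n,p,\delta,\rho)\,\ell(Q)^{\delta p-n}I$; since $\ell(Q)^{\delta p-n}=\vert Q\vert^{p(\delta/n-1/p)}$ and $\vert Q\vert^{-1}\int_Q\vert u-u_Q\vert^p\,dy\le 2^p\vert Q\vert^{-1}\int_Q\vert u-u_{Q_0}\vert^p\,dy$, this is the asserted case $q=p$.

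There is no deep step; the work is bookkeeping. One must choose $N$ depending on $n$ and $\rho$ only, verify that all product sets appearing in the chain estimate lie inside the truncated region $z\in Q\cap B^n(y,\rho\ell(Q))$ (which is why the subcubes have side $\ell(Q)/N$ and the chains use face-adjacent cubes only), and track the powers of $\ell(Q)$ so that the homogeneity factor comes out as exactly $\vert Q\vert^{p(\delta/n-1/p)}$ — which works since $\vert Q_i\vert^{-2}\ell(Q)^{n+\delta p}$ is a fixed multiple (depending on $n,\delta,p,N$) of $\ell(Q)^{\delta p-n}=\vert Q\vert^{p(\delta/n-1/p)}$.
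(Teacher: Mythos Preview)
Your proof is correct and follows essentially the same approach as the paper's: partition $Q$ into enough congruent subcubes so that face-adjacent pairs fit inside the $\rho\ell(Q)$-ball, control the oscillation on each subcube by the truncated seminorm via Jensen plus the pointwise kernel bound, and bridge between subcube averages by a face-adjacent chain of length $\lesssim nN$. The only cosmetic differences are that the paper first rescales to $Q=[0,1]^n$ and carries the exponent $q$ throughout (using the union $Q_{\sigma(i)}\cup Q_{\sigma(i+1)}$ and its average as an intermediate), whereas you reduce to $q=p$ by Jensen at the outset and compare consecutive subcube averages directly; neither change affects the substance.
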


\begin{proof}
Without loss of generality we may assume that $Q=[0,1]^n$. 
This comes from a simple scaling and translation argument.

Let us divide $Q$ into $k^n$ congruent and closed subcubes $Q_1,\ldots,Q_{k^n}$, where $k$ is chosen such that $R\subset B^n(y,\rho)$ for every $y\in R$ whenever
$R$ is a union of two cubes $Q_i$ and $Q_j$, $i,j\in \{1,2,\ldots,k^n\}$, sharing
a common face; in particular, the case $i=j$ is allowed.
We obtain
\begin{equation}\label{rest}
\begin{split}
\frac{1}{|R|} \int_{R} |u(y)-u_{R}|^q\,dy 
&\le \bigg(\frac{1}{|R|} \int_{R} |u(y)-u_{R}|^p\,dy\bigg)^{q/p}\\
&\le \bigg(\frac{1}{|R|} \int_{R} \frac{1}{|R|} \int_{R} |u(y)-u(z)|^p\,dz\,dy\bigg)^{q/p}\\
&\lesssim |R|^{q(\delta/n-1/p)}\bigg(\int_{R} \int_{R} \frac{|u(y)-u(z)|^p}{|z-y|^{n+\delta p}}\,dz\,dy\bigg)^{q/p}\\
&\lesssim \bigg(\int_{Q} \int_{Q\cap B^n(y,\,\rho)} \frac{|u(y)-u(z)|^p}{|z-y|^{n+\delta p}}\,dz\,dy\bigg)^{q/p}\,.
\end{split}
\end{equation}
H\"older's inequality and Minkowski's inequality yield
\begin{align}\label{kaksi}
&\frac{1}{|Q|} \int_Q |u(y)-u_Q|^qdy \lesssim \frac{1}{|Q|} \int_Q |u(y)-u_{Q_1}|^qdy \notag\\
&\lesssim \sum_{j=1}^{k^n} \int_{Q_j}|u(y)-u_{Q_j}|^qdy + \sum_{j=1}^{k^n} \int_{Q_j} |u_{Q_j}-u_{Q_1}|^q dy\,.
\end{align}
By \eqref{rest} it is enough to estimate the second series in \eqref{kaksi}. Let us
fix $Q_j$, $j\in \{1,\ldots,k^n\}$, and let
$\sigma:\{1,2,\ldots,kn\}\to \{1,2,\ldots,k^n\}$ be 
such that
$\sigma(1)=1$, $\sigma(kn)=j$, and the subsequent cubes
 $Q_{\sigma(i)}$ and $Q_{\sigma(i+1)}$  share
a common face if $i=1,\ldots,kn-1$.
Since $kn\lesssim 1$, we obtain
\begin{align}\label{sums}
|u_{Q_{j}}-u_{Q_1}|^q &\le \bigg(\sum_{i=1}^{kn-1} |u_{Q_{\sigma(i+1)}}-u_{Q_{\sigma(i)}}|\bigg)^q\notag\\
&\lesssim \sum_{i=1}^{kn-1} |u_{Q_{\sigma(i+1)}}-u_{Q_{\sigma(i+1)}\cup Q_{\sigma(i)}}|^q+\sum_{i=1}^{kn-1}|u_{Q_{\sigma(i+1)}\cup Q_{\sigma(i)}}-u_{Q_{\sigma(i)}}|^q.
\end{align}
Let us consider the first sum in \eqref{sums}. Note that
\begin{align*}
&|u_{Q_{\sigma(i+1)}}-u_{Q_{\sigma(i+1)}\cup Q_{\sigma(i)}}|^q\\&\le \frac{1}{|Q_{\sigma(i+1)}|}\int_{Q_{\sigma(i+1)}} |u_{Q_{\sigma(i+1)}} -u(y) + u(y)-u_{Q_{\sigma(i+1)}\cup Q_{\sigma(i)}}|^q dy\\
&\lesssim \frac{1}{|Q_{\sigma(i+1)}|}\int_{Q_{\sigma(i+1)}} |u(y)-u_{Q_{\sigma(i+1)}}|^qdy\\
&\qquad + \frac{1}{|{Q_{\sigma(i+1)}\cup Q_{\sigma(i)}}|} \int_{{Q_{\sigma(i+1)}\cup Q_{\sigma(i)}}} |u(y)-u_{{Q_{\sigma(i+1)}\cup Q_{\sigma(i)}}}|^q dy.
\end{align*}
By \eqref{rest} we obtain
\[
\sum_{i=1}^{kn-1} |u_{Q_{\sigma(i+1)}}-u_{Q_{\sigma(i+1)}\cup Q_{\sigma(i)}}|^q
\lesssim \bigg(\int_{Q} \int_{Q\cap B^n(y,\,\rho)} \frac{|u(y)-u(z)|^p}{|z-y|^{n+\delta p}}\,dz\,dy\bigg)^{q/p}.
\]
Similar estimates for the remaining sum in \eqref{sums}  conclude the proof.
\end{proof}

We also need some estimates involving porous sets in $\R^n$.

\begin{definition}\label{porous}
A  set $S$ in Euclidean $n$-space 
is {\em porous in $\R^n$}  if for some $\kappa\in (0,1]$
the following statement is true:
for every  $x\in\R^n$ and $0<r\le 1$ there is $y\in B^n(x,r)$ such that
$B^n(y,\kappa\,r)\cap S=\emptyset$.
\end{definition}

The  following lemma gives a norm estimate related to
porous sets, and it is based on maximal function techniques. This lemma might be of independent interest.

\begin{lemma}\label{por_lem}
Suppose that $S$ is porous  in $\R^n$ and let $1\le p<\infty$.
If 
$x\in S$ and $0<r\le 1$, then
\[
\int_{B^n(x,r)} \log^p\frac{1}{\dist(y,S)}\,dy
\le  cr^n(1+\log^p r^{-1}),
\]
where the constant $c$ is independent of $x$ and $r$.
\end{lemma}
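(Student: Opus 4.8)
The plan is to exploit porosity through a Whitney-type covering of $B^n(x,r)$ by dyadic cubes on which $\log\frac{1}{\dist(\cdot,S)}$ is controlled, and then sum the contributions dyadically. First I would normalize: since $x\in S$ and $0<r\le 1$, and since the integrand is large only where $\dist(y,S)$ is small, I would split $B^n(x,r)$ according to the scale of $\dist(y,S)$. Concretely, for $j\ge 0$ let $E_j=\{y\in B^n(x,r): 2^{-j-1}r<\dist(y,S)\le 2^{-j}r\}$ (with an extra piece where $\dist(y,S)>r/2$, on which the integrand is bounded by $(1+\log^p r^{-1})$ up to constants, contributing $\lesssim r^n(1+\log^p r^{-1})$ as desired). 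On $E_j$ one has $\log\frac{1}{\dist(y,S)}\le \log\frac{2^{j+1}}{r}\lesssim j+1+\log r^{-1}\lesssim (j+1)(1+\log r^{-1})$, so the integral over $E_j$ is at most $c\,(j+1)^p(1+\log^p r^{-1})\,|E_j|$, and everything reduces to the measure estimate
\begin{equation}\label{plan-meas}
|E_j|\le c\,2^{-j}r^n,\qquad j\ge 0 ,
\end{equation}
after which $\sum_{j\ge 0}(j+1)^p 2^{-j}<\infty$ closes the argument.

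The core is therefore \eqref{plan-meas}, and this is exactly where porosity enters. The set $E_j$ is contained in the $2^{-j}r$-neighborhood of $S$ intersected with $B^n(x,r)$; I would cover $B^n(x,r)$ by $\lesssim 2^{jn}$ cubes of side length comparable to $2^{-j}r$, and show that each such cube $Q$ meeting $E_j$ is essentially accounted for: by the porosity condition applied at scale $\sim 2^{-j}r\le 1$ around a point of $Q$, there is a sub-ball of radius $\kappa\cdot(\text{that scale})$ inside $Q$ (or a fixed dilate of $Q$) that avoids $S$ entirely, hence lies at distance $>2^{-j-1}r$ from $S$ for $j$ large enough relative to $\kappa$, i.e. is disjoint from $E_j$. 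This gives $|E_j\cap Q|\le (1-c\kappa^n)|Q|$ for a fixed proportion — but a single such estimate is not enough; I would instead iterate it across dyadic scales between $r$ and $2^{-j}r$. That is, using porosity at every intermediate scale, one shows the neighborhood $\{y:\dist(y,S)\le 2^{-j}r\}\cap B^n(x,r)$ has measure at most $(1-c\kappa^n)^{j}\,c\,r^n$ by a standard self-improving iteration (at each halving of the scale a fixed fraction of the remaining mass is cut away). Since $(1-c\kappa^n)^{j}=2^{-\alpha j}$ for some $\alpha=\alpha(\kappa,n)>0$, and $E_j$ sits inside this neighborhood, one gets $|E_j|\le c\,2^{-\alpha j}r^n$; absorbing the possibly-small $\alpha$ into the geometric series $\sum_j (j+1)^p 2^{-\alpha j}<\infty$ is harmless and yields the claim. (In fact this shows $\dim_{\mathcal M}(S)<n$, which is the conceptual content: porous sets are measure-null in a quantitative, scale-uniform way.)

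The main obstacle I anticipate is making the iteration across scales fully rigorous and uniform in $x$ and $r$: porosity gives an avoiding ball \emph{somewhere} in each ball of each scale, and one must organize these avoiding balls so that they carve away a definite fraction of the $S$-neighborhood at \emph{every} step simultaneously, rather than just at one step. The cleanest route is probably the maximal-function formulation hinted at in the statement: the function $y\mapsto \log\frac1{\dist(y,S)}$, restricted to a ball of radius $r$, is dominated — up to the additive term $\log r^{-1}$ — by (a constant times) the dyadic maximal function of a sum $\sum_j 2^{-\alpha j/p'}\chi_{N_j}$ of indicator functions of thin neighborhoods $N_j$ whose measures decay geometrically by \eqref{plan-meas}; the Hardy--Littlewood maximal theorem then gives the $L^p$ bound directly. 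Either way — direct dyadic summation or maximal-function packaging — the substance is the geometric decay \eqref{plan-meas} forced by porosity, and once that is in hand the $\log^p$ integrability and the explicit bound $cr^n(1+\log^p r^{-1})$ follow by elementary estimates.
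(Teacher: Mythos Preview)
Your approach is correct and genuinely different from the paper's. You perform a layer--cake decomposition into the dyadic shells $E_j=\{y\in B^n(x,r): 2^{-j-1}r<\dist(y,S)\le 2^{-j}r\}$ and reduce everything to the measure decay $|E_j|\lesssim 2^{-\alpha j}r^n$, which is exactly the statement that a porous set has (local, uniform) upper Minkowski/Assouad codimension $\alpha>0$; once that classical fact is established by the scale iteration you outline, the series $\sum_j (j+1)^p 2^{-\alpha j}$ does the rest. The paper instead bounds $\log\dist(y,S)^{-1}$ \emph{pointwise} (up to the additive term $\log\ell(Q)^{-1}$) by the counting function $\sum_{R\in\mathcal{C}_S,\,R\subset Q}\chi_R(y)$, where $\mathcal{C}_S$ is a family of dyadic cubes near $S$, and then invokes an external result \cite[Theorem~2.10]{ihna} on porous sets to show that this counting function has $L^p$ norm $\lesssim |Q|^{1/p}$. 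Your route is more self--contained and conceptually transparent (it isolates the single geometric input ``porous $\Rightarrow$ Assouad dimension $<n$'' and then does calculus), whereas the paper's route packages the $L^p$ estimate in one stroke via the cited maximal--function machinery and would adapt more readily to other monotone functions of $\dist(\cdot,S)$ than $\log$. The one place you should be careful is precisely the one you flag: the iteration needs the observation that if $B^n(w,\kappa\rho)\cap S=\emptyset$ then every $y\in B^n(w,\kappa\rho/2)$ satisfies $\dist(y,S)\ge \kappa\rho/2$, so that at each halving of scale a \emph{fixed} proportion of the remaining neighbourhood is removed; with that in hand the geometric decay is routine.
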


\begin{proof}
Let us write
\[
\mathcal{C}_{S} = \{R\in\mathcal{D}\,:\,\mathrm{dist}(x_R,S)/(4+\sqrt n)\le \ell(R)\le 1\},
\]
where $x_R$ is the midpoint of a dyadic cube $R$.
Suppose that
$R\in\mathcal{D}$ is such that $\ell(R)\le 1$ and
$\dist(y,S)\le 4\ell(R)$ for some $y\in R$. Then, since
\begin{equation}\label{r_inclusion}
\begin{split}
\dist(x_R,S)&\le \dist(x_R,y) + \dist(y,S)\\&\le \sqrt n\ell(R) + \dist(y,S) \le (4+\sqrt n)\ell(R)
\end{split}
\end{equation}
for the midpoint of $R$, we conclude that $R\in\mathcal{C}_S$.

Fix $j\in \N_0$ such that $2^{-j}\le r< 2^{-j+1}$, and
consider a dyadic cube $Q\in\mathcal{D}_{j}$ for which
$Q\cap B^n(x,r)\not=\emptyset$. 
By covering $B^n(x,r)$ with
such dyadic cubes it is enough to show that
\begin{equation}\label{test}
|| \log\mathrm{dist}(\cdot,S)^{-1} ||_{L^p(Q\cap B^n(x,r))}^p
\lesssim  r^n(1+\log^p r^{-1}).
\end{equation}
By the porosity and the Lebesgue density theorem, the  $n$-measure
of $S$ is zero. 
Hence, it is enough to consider points $y\in Q\cap B^n(x,r)\setminus S$. Since $x\in S$,
\begin{equation}\label{q_est}
1\le \frac{2\ell(Q)}{\dist(y,S)}.
\end{equation}
Let us consider a finite sequence of dyadic cubes 
\[
Q=Q_0(y)\supset Q_1(y)\supset \dotsb \supset Q_m(y),
\]
each of them containing the point $y$ and satisfying 
\begin{equation}\label{ratio}
\ell(Q_i(y))/\ell(Q_{i+1}(y))=2,\qquad i=0,1,\ldots,m-1.
\end{equation} The 
last cube is chosen to satisfy
\begin{equation}\label{qsat}
\dist(y,S)/4 \le \ell(Q_m(y)) < \dist(y,S)/2.
\end{equation}
From \eqref{q_est} it follows that $m\ge 1$.
By \eqref{ratio} and \eqref{q_est}
\[
2^m = \prod_{i=0}^{m-1} \frac{\ell(Q_i(y))}{\ell(Q_{i+1}(y))} =\frac{\ell(Q_0(y))}{\ell(Q_m(y))}
>\frac{2\ell(Q_0(y))}{\dist(y,S)}=\frac{2\ell(Q)}{\dist(y,S)}\ge 1.
\]
Hence,
\[
m\ge \log 2^m\ge  \log 2\ell(Q) - \log \dist(y,S)\ge 0.
\]
Furthermore, \eqref{qsat}  and \eqref{r_inclusion} yield
$Q_i(y)\in \mathcal{C}_S$ if $i=0,1,\ldots,m$.
Thus, we obtain
\begin{align*}
\sum_{\substack{R\in\mathcal{C}_S\\R\subset Q}} \chi_R(y) \ge 1+m &\ge 1+\log (\ell(Q)) - \log \dist(y,S)
\ge 0,
\end{align*}
where $\chi_R$ is the characteristic function of $R$.
Integrating this inequality and using triangle-inequality yields
\begin{align*}
&||\log \dist(\cdot,S)^{-1} ||_{L^p(Q\cap B^n(x,r))}
\\&\le |1+\log \ell(Q)|\,|Q\cap B^n(x,r)|^{1/p} +\bigg\|  \sum_{\substack{R\in\mathcal{C}_S\\R\subset Q}} \chi_R \bigg\|_{L^p(\R^n)}.
\end{align*}
Since $S$ is porous in $\R^n$, we may 
follow the proof of 
\cite[Theorem 2.10]{ihna}. We obtain a constant
$K_\kappa$, depending on $\kappa$ in Definition \ref{porous}, and
families 
\[\{\hat R\}_{R\in\mathcal{C}_S^k},\quad \mathcal{C}_S^k\subset \mathcal{C}_S,\qquad k=0,1,\ldots,K_\kappa-1,\]
where each $\{\hat R\}_{R\in\mathcal{C}_S^k}$ is a disjoint family of cubes $\hat R\subset R$, such that
\begin{align*}
 \bigg\|  \sum_{\substack{R\in\mathcal{C}_S\\R\subset Q}} \chi_R \bigg\|_{L^p(\R^n)}
\lesssim \sum_{k=0}^{K_\kappa-1} \bigg\|  \sum_{\substack{R\in\mathcal{C}_S^k\\R\subset Q}} \chi_{\hat R} \bigg\|_{L^p(\R^n)}
\le \sum_{k=0}^{K_\kappa-1} ||\chi_Q||_{L^p(\R^n)}\lesssim |Q|^{1/p}.
\end{align*}
By combining the estimates we obtain
\begin{align*}
||\log \dist(\cdot,S)^{-1} ||_{L^p(Q\cap B^n(x,r))} 
\lesssim (1+\log\ell(Q)^{-1})|Q|^{1/p}
\lesssim (1+\log r^{-1})r^{n/p}.
\end{align*}
Estimate \eqref{test} follows.
\end{proof}

\section{Conditions for the fractional Poincar\'e inequality}

In the following theorem we give sufficient conditions for
a bounded domain to support
the fractional $(q,p)$-Poincar\'e inequality \eqref{fractionalqp}.

\begin{theorem}\label{thmP}
Let $G$ be a bounded domain in $n$-dimensional Euclidean space, $n\geq 2\,,$ 
with a Whitney decomposition $W$.
Let $1\le q\le p<\infty\,$ and let $\delta,\tau\in (0,1)$.

(1) If $q<p$ and if there exists a chain decomposition of $G$ such that
\begin{equation}\label{sharpe}
\sum_{A\in W} \Bigg(\sum_{Q\in A(W)}
\ell(\mathcal{C}(Q^*))^{q-1}|Q|\,  |A|^{q(\delta /n-1/p)}\Bigg)^{p/(p-q)}<\infty\,,
\end{equation}
then  $G$ supports the fractional $(q,p)$-Poincar\'e inequality 
\eqref{fractionalqp}.

(2) If $q=p$ and if there exists a chain decomposition of $G$ such that
\begin{equation}\label{pp}
\sup_{A\in W}
\sum_{Q\in A(W)}
\ell(\mathcal{C}(Q^*))^{p-1}|Q|\,  |A|^{p\delta /n-1}<\infty\,,
\end{equation}
then $G$ supports the fractional $(p,p)$-Poincar\'e inequality 
\eqref{fractionalqp}.
\end{theorem}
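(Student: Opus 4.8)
The plan is to reduce the global Poincar\'e inequality to the cubewise inequality of Lemma~\ref{inequality_cube} by means of the chain decomposition, following the standard ``chaining'' scheme. First I would fix $u\in L^p(G)$ and use the Whitney decomposition together with the fixed central cube $Q_0$; normalizing by the constant $u_{Q_0^*}$, write
\begin{equation*}
\int_G |u(x)-u_G|^q\,dx \lesssim \int_G |u(x)-u_{Q_0^*}|^q\,dx \le \sum_{Q\in W}\int_{Q}|u(x)-u_{Q_0^*}|^q\,dx,
\end{equation*}
and on each $Q$ split $|u(x)-u_{Q_0^*}|\le |u(x)-u_{Q^*}| + |u_{Q^*}-u_{Q_0^*}|$. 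The first term is handled directly by Lemma~\ref{inequality_cube} applied to $Q^*$ (with a suitable $\rho$ chosen so that $B^n(y,\rho\ell(Q^*))\subset B^n(y,\tau\dist(y,\partial G))$, which is possible by \eqref{dist_est}), and summing over $Q\in W$ with bounded overlap gives a term dominated by the right-hand side of \eqref{fractionalqp}. The core of the argument is the ``telescoping'' estimate for $|u_{Q^*}-u_{Q_0^*}|$ along the chain $\mathcal{C}(Q^*)=(Q_0^*,\dots,Q_k^*)$: using that consecutive $Q_i^*\cap Q_{i+1}^*\neq\emptyset$ and both are comparable Whitney cubes, interpolate through $u_{Q_i^*\cup Q_{i+1}^*}$ to get, after H\"older in $i$ (this produces the factor $\ell(\mathcal{C}(Q^*))^{q-1}$),
\begin{equation*}
|u_{Q^*}-u_{Q_0^*}|^q \lesssim \ell(\mathcal{C}(Q^*))^{q-1}\sum_{i=0}^{k-1} a_i(Q)^q, \qquad a_i(Q)^q \lesssim |Q_i^*|^{q(\delta/n-1/p)}\Big(\int\!\!\int_{Q_i^*} \cdots\Big)^{q/p},
\end{equation*}
where each $a_i(Q)$ is bounded via Lemma~\ref{inequality_cube} by the local fractional seminorm over the relevant Whitney cube $A$ with $A^*\in\mathcal{C}(Q^*)$, i.e.\ with $Q\in A(W)$.

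Next I would interchange the order of summation: $\sum_{Q\in W}\sum_{i}\big(\cdots\text{over }Q_i^*\big)$ becomes $\sum_{A\in W}\big(\text{local seminorm over }A^*\big)\cdot\big(\sum_{Q\in A(W)}\ell(\mathcal{C}(Q^*))^{q-1}|Q|\,|A|^{q(\delta/n-1/p)}\big)$, where the factor $|Q|$ arises from integrating the constant $|u_{Q^*}-u_{Q_0^*}|^q$ over $x\in Q$. Denote the inner sum by $b_A$; hypothesis \eqref{sharpe} says $(b_A)_A\in \ell^{p/(p-q)}(W)$, while hypothesis \eqref{pp} says $(b_A)_A\in\ell^\infty(W)$ (note that when $q=p$ the exponent $q(\delta/n-1/p)$ becomes $p\delta/n-1$, matching \eqref{pp}). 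In case~(2), $q=p$, this is immediate: $\sum_A b_A\,(\text{seminorm over }A^*) \le \big(\sup_A b_A\big)\sum_A (\text{seminorm over }A^*)$, and the last sum is $\lesssim$ the right-hand side of \eqref{fractionalqp} by bounded overlap of the cubes $A^*$. In case~(1), $q<p$, apply H\"older's inequality with exponents $p/q$ and $p/(p-q)$ to $\sum_A b_A\,c_A$, where $c_A = (\text{seminorm over }A^*)$; then $\sum_A b_A c_A \le \|(b_A)\|_{\ell^{p/(p-q)}}\|(c_A)\|_{\ell^{p/q}}^{?}$—more precisely, since we have a sum of $q$-th powers we get $\big(\sum_A b_A c_A^{q}\big) \le \|(b_A)\|_{p/(p-q)}\big(\sum_A c_A^{p}\big)^{q/p}$ wait; I should be careful: each term already carries the right power so after raising the whole chain contribution to match the $q$ on the left, H\"older gives exactly $\sum_{Q}(\cdots) \lesssim \big(\sum_A b_A^{p/(p-q)}\big)^{(p-q)/p}\big(\sum_A c_A\big)^{q/p}$ with $c_A$ the $p$-seminorm over $A^*$, and $\sum_A c_A\lesssim \int_G\int_{G\cap B^n(x,\tau\dist(x,\partial G))}\cdots$ by bounded overlap.

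The main obstacle I anticipate is \emph{bookkeeping the geometry of the chains} so that the combinatorial reorganization is legitimate: one must verify that summing the local seminorm contributions of each $A^*$ over all chains that pass through it produces precisely the weight $\sum_{Q\in A(W)}\ell(\mathcal{C}(Q^*))^{q-1}|Q|\,|A|^{q(\delta/n-1/p)}$ appearing in \eqref{sharpe}–\eqref{pp}, with no double-counting beyond the controlled bounded-overlap constant. A secondary technical point is calibrating the radius parameter: Lemma~\ref{inequality_cube} is stated with a ball $B^n(y,\rho\ell(Q))$ internal to the cube, whereas \eqref{fractionalqp} uses $B^n(x,\tau\dist(x,\partial G))$; by \eqref{dist_est} we have $\dist(x,\partial G)\ge \tfrac{3}{4}\diam(Q^*)\gtrsim \ell(Q^*)$ for $x\in Q^*$, so choosing $\rho$ small depending only on $n$ and $\tau$ makes $B^n(y,\rho\ell(Q^*))\subset B^n(y,\tau\dist(y,\partial G))\cap G$, and the same holds for the two-cube unions $Q_i^*\cup Q_{i+1}^*$ after enlarging $\rho$'s role—this is routine but must be done uniformly. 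Once these two points are settled, assembling the estimates yields \eqref{fractionalqp} with a constant depending only on $n,p,q,\delta,\tau$ and the (finite) quantity in \eqref{sharpe} or \eqref{pp}.
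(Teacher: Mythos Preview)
Your proposal is correct and follows essentially the same chaining scheme as the paper: split $|u-u_{Q_0^*}|$ into the local oscillation and the telescoping chain term, apply Lemma~\ref{inequality_cube} to each link (the paper takes $\rho=2\tau/3$ and uses $|Q_i^*\cap Q_{i+1}^*|\gtrsim\max\{|Q_i^*|,|Q_{i+1}^*|\}$ rather than passing through $u_{Q_i^*\cup Q_{i+1}^*}$, but this is cosmetic), then reorganize the double sum into $\sum_{A\in W}(\cdots)$ and finish with H\"older in the pair $(p/q,p/(p-q))$ using \eqref{sharpe}, or with the supremum bound \eqref{pp} when $q=p$. One small point: for the first sum $\sum_Q\int_{Q^*}|u-u_{Q^*}|^q$ bounded overlap alone is not enough in case~(1)---you also need H\"older with $(p/q,p/(p-q))$ and the fact that $\sum_{Q\in W}|Q|^{(1+q\delta/n-q/p)p/(p-q)}<\infty$ (the exponent exceeds $1$), exactly as the paper does.
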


\begin{proof}
We prove (1); the proof of (2) is similar. Let $\delta$ and $\tau$ in $(0,1)$ be given.
We use H\"older's inequality and Minkowski's inequality and then the Whitney decomposition 
to obtain
\begin{align}\label{integraltriangle}
\int_G \vert u(x)-u_G\vert^q\,dx
&\lesssim\int_{G}\vert u(x)-u_{Q_0^*}\vert^q\,dx\notag
\\&\le \sum_{Q\in W_{}}\int_{Q^*}
\vert u(x)-u_{Q_0^*}\vert ^q\,dx \notag\\
&\lesssim \sum_{Q\in W_{}}\int_{Q^*}\vert u(x)-u_{Q^*}\vert^q\,dx
+ \sum_{Q\in W_{}}\int_{Q^*}
\vert u_{Q^*}-u_{Q_0^*}\vert ^q\,dx\,.
\end{align}
Lemma \ref{inequality_cube} with $\rho=2\tau/3$ yields
\begin{align*}
&\int_{Q^*}
\vert u(x)-u_{ Q^*}\vert ^q\,dx
\\&\lesssim \vert Q^*\vert^{1+q(\delta /n -1/p)}\biggl(
\int_{ Q^*}\int_{Q^*\cap B^n(y,\,\rho \ell (Q^*))} 
\frac{\vert u(z)-u(y)\vert^p}{\vert z-y\vert^{n+\delta p}}\,dz\,dy
\biggr)^{q/p}\,.
\end{align*}
Inequalities \eqref{dist_est} and $(1+q\delta /n-q/p )(p/(p-q))>1$
imply
\begin{align*}
&\sum_{Q\in W_{}}\int_{ Q^*}\vert u(x)-u_{ Q^*}\vert^q\,dx\\
&\lesssim \sum_{Q \in W} |Q|^{
1+q\delta /n-q/p}
\biggl(\int_{Q^*} \int_{Q^*\cap B^n(y,\,\rho \ell (Q^*))} \frac{|u(y)-u(z)|^p}{|z-y|^{n+{\delta}p}}\,dz\,dy\bigg)^{q/p}
\\
&\le \bigg( \sum_{Q\in W} ( 
|Q|^{1+q\delta /n-q/p} )^{p/(p-q)} \bigg)^{(p-q)/p}\\
&\qquad\qquad\qquad
\bigg( \sum_{Q\in W} \int_{Q^*}\int_{Q^*\cap B^n(y,\,\rho \ell (Q^*))} \frac{|u(y)-u(z)|^p}{|z-y|^{n+\delta p}}\,dz\,dy \bigg)^{q/p}\\
&\lesssim
\bigg( \int_{G} \int_{G\cap B^n(y,\tau \dist(y,\partial G))} \frac{|u(y)-u(z)|^p}{|z-y|^{n+\delta p}}\,dz\,dy \bigg)^{q/p}.
\end{align*}

Next, we estimate the latter sum in (\ref{integraltriangle}).
By using chains from the chain decomposition we obtain
\begin{align*}
\sum_{Q\in W_{}}\int_{ Q^*}
\vert u_{ Q^*}-u_{ Q^*_0}\vert ^q\,dx
&\lesssim
\sum_{Q\in W}\vert Q\vert\,
\biggl(\sum_{j=1}^{k}
\vert u_{ Q^*_{j}}-
u_{ Q^*_{j-1}}
\vert \biggr)^q\\
&\le
\sum_{Q\in W}
{\ell(\C(Q^*))}^{q-1}\vert Q\vert\, \biggl(\sum_{j=1}^{k}
\vert u_{ Q^*_{j}}-
u_{ Q^*_{j-1}}
\vert^q\biggr)\,.
\end{align*}
Estimate $\max\{|Q^*_j|,|Q^*_{j-1}|\}\lesssim |Q^*_j\cap Q^*_{j-1}|$
and H\"older's inequality yield
\begin{align*}
\vert u_{ Q^*_{j}}-
u_{ Q^*_{j-1}}
\vert^q
&\lesssim
\sum_{i=j-1}^{j}
\biggl(\vert  Q^*_{i}\vert ^{-1}\int_{ Q^*_i}
\vert u(x)-u_{ Q^*_i}\vert\,dx\biggr)^q\\
&\le
\sum_{i=j-1}^{j}
\vert  Q^*_i\vert ^{-1}\int_{ Q^*_i}
\vert u(x)-u_{ Q^*_i}\vert^q\,dx\,.
\end{align*}
Lemma \ref{inequality_cube} with $\rho=2\tau/3$ implies
\begin{align*}
&\vert u_{ Q^*_{j}}-
u_{ Q^*_{j-1}}
\vert^q\\
&\lesssim
\sum_{i=j-1}^{j}
\vert Q^*_i\vert^{q(\delta /n -1/p)}
\biggl(
\int_{ Q^*_i}\int_{Q_i^*\cap B^n(y,\,\rho \ell (Q_i^*))}
\frac{\vert u(z)-u(y)\vert^p}{\vert z-y\vert^{n+\delta p}}\,dz\,dy\biggr)^{q/p}\,.
\end{align*}
We have obtained for the second sum in \eqref{integraltriangle}
\begin{align*}
&\sum_{Q\in W_{}}\int_{ Q^*}
\vert u_{ Q^*}-u_{ Q^*_0}\vert ^q\,dx\\
&\lesssim\sum_{Q\in W}
{\ell(\C(Q^*))}^{q-1}\vert Q\vert
\\&\qquad\qquad\biggl(
\sum_{j=0}^{k}
\vert  Q^*_j\vert ^{q(\delta /n-1/p)}
\biggl(\int_{ Q^*_j}\int_{Q_j^*\cap B^n(y,\,\rho \ell (Q_j^*)}
\frac{\vert u(z)-u(y)\vert^p}{\vert z-y\vert^{n+\delta p}}\,dz\,dy
\bigg)^{q/p}\biggr)\,.
\end{align*}
When we rearrange the double sum, we obtain
\begin{align*}
&\sum_{Q\in W_{}}\int_{ Q^*}
\vert u_{ Q^*}-u_{ Q^*_0}\vert ^q\,dx\\
&\lesssim \sum_{A\in W} \sum_{Q\in A(W)} 
\ell(\C(Q^*))^{q-1} |Q|\,|A|^{q(\delta /n-1/p)}\\
&\qquad\qquad\qquad
\bigg(\int_{A^*} \int_{A^*\cap B^n(y,\,\rho \ell (A^*))} \frac{|u(y)-u(z)|^p}{|z-y|^{n+\delta p}}\,dz\,dy\bigg)^{q/p}\,.
\end{align*}
H\"older's inequality with
$\left ( \frac{p}{q},\frac{p}{p-q}\right)$, and 
inequalities  (\ref{sharpe}) and \eqref{dist_est} yield
\begin{align*}
\sum_{Q\in W_{}}\int_{ Q^*}
\vert u_{ Q^*}-u_{ Q^*_0}\vert ^q\,dx
&\lesssim
\bigg(\sum_{A\in W} \int_{A^*} \int_{A^*\cap B^n(y,\,\rho \ell (A^*))} \frac{|u(y)-u(z)|^p}{|z-y|^{n+\delta p}}\,dz\,dy \bigg)^{q/p}\\
&\lesssim
\bigg(\int_{G}\int_{G\cap B^n(y,\tau\dist(y,\partial G))} \frac{|u(y)-u(z)|^p}{|z-y|^{n+\delta p}}\,dz\,dy \bigg)^{q/p}.
\end{align*}
Hence, $G$ supports 
the fractional $(q,p)$-Poincar\'e inequality \eqref{fractionalqp}.
\end{proof}

\begin{remark}
Let $G$ be a dounded domain in $\Rn$ and let $1\le p<\infty\,.$
By \cite[Theorem 6.6]{Hu} the estimate 
\begin{equation}\label{poincarecondition}
\sup_{A\in W}\sum_{Q\in A(W)}\ell(\mathcal{C}(Q^*))^{p-1}\,|Q|\,|A|^{p/n-1}
<\infty
\end{equation}
is a sufficient condition for the classical $(p,p)$-Poincar\'e inequality
to be valid in the domain $G$.
A comparison to our sufficient condition \eqref{pp} 
for the {\em fractional} $(p,p)$-Poincar\'e inequality
shows that
condition \eqref{poincarecondition} for the classical $(p,p)$-Poincar\'e inequality is weaker.
\end{remark}

\section{Positive results for $1$-John domains}\label{pos_1_John}

As an application of Theorem \ref{thmP} we show
that $1$-John domains support the fractional
$(p,p)$-Poincar\'e inequality, Theorem \ref{pp_John}. 
We also
consider fractional Sobolev--Poincar\'e inequalities,
Theorem \ref{fractional_sobolev_poincare} and Remark \ref{remaining_results}.
We recall that
bounded uniform and Lipschitz domains are examples of
$1$-John domains.


\begin{definition}\label{sjohn}
A bounded domain $G$ in $\R^n$, $n\ge 2$, is an {\em $s$-John domain}, $s\ge 1$, if
there is a point $x_0$ in $G$ and a constant $c>0$ such that every point $x$ in $G$ can be joined
to $x_0$ by a rectifiable curve $\gamma:[0,l]\to G$ parametrized by its arc length 
for which $\gamma(0)=x$, $\gamma(l)=x_0$, $l\le c$, and
\[
\dist(\gamma(t),\partial G)\ge t^s/c\quad \text{ for }t\in [0,l].
\]
The point $x_0$ is called an {\em $s$-John center} of $G$.
\end{definition}

If $G$ is a $1$-John domain, then its boundary $\partial G$ is porous in $\R^n$,
Definition \ref{porous}. The boundary of an $s$-John domain with $s>1$
may have positive Lebesgue $n$-measure, \cite{N1}, and thus it is not
necessarily porous in $\R^n$.

Let us construct a chain decomposition
of a given $s$-John domain $G$.
Let $Q\in W=W_G$ and fix a rectifiable curve
 $\gamma$ that is parametrized by its arc length
and joins the midpoints $x_Q$ and $x_0:=x_{Q_0}$, Definition \ref{sjohn}.
Assume that $x_{Q_0}$ lies in one of the
cubes intersecting $Q$. Join
$x_Q$ to $x_{Q_0}$ by an arc that is contained in $Q\cup Q_0$ and
whose length is comparable to $\ell(Q)$.
Otherwise there is $r>0$
such that $\gamma(r)$ lies
in the boundary of a Whitney cube $P$ that intersects $Q$ and
$\gamma(t)$ belongs to a cube
that is not intersecting $Q$ whenever $t\in (r,\ell(\gamma)]$. 
Join the midpoint
$x_Q$ to the midpoint $x_P$ by an
arc whose length is comparable to $\ell(Q)$ and
is in $Q\cup P$.
We iterate these steps
with $Q$ replaced by $P$, and we continue
until we reach $x_{Q_0}$. 
Let $\gamma_Q$ be this composed curve parametrized by its arc length.
It is straightforward 
to verify that $\ell(\gamma_Q)\le c$ and
\begin{equation}\label{esg}
t^s/c \le \,\dist(\gamma_Q(t),\partial G)\qquad \text{ if }\,t\in[0,\ell(\gamma_Q)],
\end{equation}
where $c>0$ depends
on  the $s$-John constant of $G$, $s$, and $n$.
Let $\mathcal{C}(Q^*)$ be a chain consisting
of cubes $A^*$ such that
$A\in W$ and $x_A\in \gamma_Q[0,\ell(\gamma_Q)]$.



For $1$-John domains we first have the following result.

\begin{theorem}\label{pp_John}
A $1$-John domain $G$ in $\R^n$
supports the fractional $(p,p)$-Poincar\'e inequality
\eqref{fractionalqp} if $1\le p<\infty$ and
$\tau,\delta\in(0,1)$.
\end{theorem}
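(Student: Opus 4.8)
The plan is to deduce Theorem \ref{pp_John} from part (2) of Theorem \ref{thmP} by verifying the summability condition \eqref{pp} for the chain decomposition of the $1$-John domain $G$ constructed above. Thus, with $s=1$, I must show that
\[
\sup_{A\in W}\sum_{Q\in A(W)}\ell(\mathcal{C}(Q^*))^{p-1}\,|Q|\,|A|^{p\delta/n-1}<\infty.
\]
First I would record the standard consequences of the John condition for the chain decomposition: if $A^*\in\mathcal{C}(Q^*)$, then the portion of the curve $\gamma_Q$ from $x_A$ onwards has length comparable to the distance travelled, so by \eqref{esg} and \eqref{dist_est} one has $\ell(\mathcal{C}(Q^*))\lesssim \operatorname{diam}(Q)^{-?}$—more precisely, the number of Whitney cubes in the chain from $Q_0^*$ up to $Q^*$ is comparable to $\log(\ell(Q_0)/\ell(Q))$, hence $\ell(\mathcal{C}(Q^*))\lesssim 1+\log\ell(Q)^{-1}\approx 1+\log\operatorname{dist}(x,\partial G)^{-1}$ for $x\in Q$. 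Also, if $A\in W$ and $Q\in A(W)$, i.e.\ $A^*$ lies on the chain joining $Q_0^*$ to $Q^*$, then $\ell(Q)\lesssim\ell(A)$ (cubes get smaller as one moves away from the center along the John curve) and, crucially, $A(W)\subset\{Q\in W: Q\cap B^n(x_A, c\,\ell(A))\neq\emptyset\}$ for a dimensional constant $c$, because every such $Q$ is joined to the center through a neighbourhood of $A$ of controlled size.

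With these geometric facts in hand, the inner sum is estimated by
\[
\sum_{Q\in A(W)}\ell(\mathcal{C}(Q^*))^{p-1}|Q|\,|A|^{p\delta/n-1}
\lesssim |A|^{p\delta/n-1}\sum_{Q\in A(W)}\bigl(1+\log\ell(Q)^{-1}\bigr)^{p-1}|Q|.
\]
Since the cubes $Q\in A(W)$ have pairwise disjoint interiors and are contained in a ball $B^n(x_A, c\,\ell(A))$, and on each such $Q$ we have $\operatorname{dist}(y,\partial G)\approx\ell(Q)$, this sum is dominated by an integral:
\[
\sum_{Q\in A(W)}\bigl(1+\log\ell(Q)^{-1}\bigr)^{p-1}|Q|
\lesssim \int_{B^n(x_A,\,c\,\ell(A))}\bigl(1+\log^{p-1}\tfrac{1}{\operatorname{dist}(y,\partial G)}\bigr)\,dy.
\]
Now I invoke Lemma \ref{por_lem}: since $G$ is a $1$-John domain, $\partial G$ is porous in $\R^n$ (as noted in the text), and $x_A$ is within a bounded distance of $\partial G$ (it lies in the bounded domain $G$), so applying the lemma with $S=\partial G$, radius $r\approx\ell(A)$, and a centre point of $\partial G$ near $x_A$ (enlarging the ball by a fixed factor), I get
\[
\int_{B^n(x_A,\,c\,\ell(A))}\log^p\tfrac{1}{\operatorname{dist}(y,\partial G)}\,dy\lesssim \ell(A)^n\bigl(1+\log^p\ell(A)^{-1}\bigr),
\]
and similarly (trivially) for the lower-order terms, giving the inner sum $\lesssim |A|^{p\delta/n}\bigl(1+\log^p\ell(A)^{-1}\bigr)$. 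Finally, since $\ell(A)\lesssim 1$ (as $G$ is bounded) and $p\delta/n>0$, the function $t\mapsto t^{p\delta/n}(1+\log^p t^{-1})$ is bounded for $t\in(0,C]$, so the supremum over $A\in W$ is finite. Condition \eqref{pp} holds, and Theorem \ref{thmP}(2) yields the fractional $(p,p)$-Poincar\'e inequality.

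The main obstacle is the careful bookkeeping in the second step: justifying that $A(W)$ is contained in a ball of radius comparable to $\ell(A)$, and that summing $|Q|$ weighted by a power of $\ell(\mathcal{C}(Q^*))$ over the pairwise-disjoint cubes $Q\in A(W)$ is genuinely comparable to the displayed logarithmic integral. This requires the standard but slightly technical John-domain chain estimates—controlling how the chain length grows and how the cube sizes decay along $\gamma_Q$—together with \eqref{dist_est} to pass between $\ell(Q)$ and $\operatorname{dist}(\cdot,\partial G)$. Once that comparison is set up, Lemma \ref{por_lem} does the essential analytic work and the rest is the elementary observation that a fixed positive power beats a logarithm near the origin.
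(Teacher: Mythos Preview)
Your proposal is correct and follows essentially the same route as the paper's own proof: verify condition \eqref{pp} of Theorem \ref{thmP}(2) using the logarithmic chain-length bound \eqref{chain_length}, the containment of $\bigcup_{Q\in A(W)} Q$ in a ball of radius $\approx\ell(A)$ centered near $\partial G$, conversion of the weighted sum to a logarithmic integral via \eqref{dist_est}, and then Lemma \ref{por_lem} (porosity of $\partial G$) to bound that integral by $|A|(1+\log^p\ell(A)^{-1})\lesssim |A|^{1-p\delta/n}$. The only cosmetic differences are that the paper centers the ball at the nearest boundary point $\omega_A$ rather than at $x_A$, and passes directly from $(1+\log\ell(Q)^{-1})^{p-1}$ to $1+\log^p\ell(Q)^{-1}$ before integrating, whereas you write $\log^{p-1}$ in the intermediate display; both amount to the same estimate.
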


\begin{proof}
We may assume
that $\diam(G)\le 1$.
By \eqref{esg} with $s=1$ and the fact that $\gamma_Q$, $Q\in W$, connects
the midpoints of cubes in $\mathcal{C}(Q^*)$, 
\begin{equation}\label{chain_length}
\ell(\mathcal{C}(Q^*))\le c\bigg(1+\log \frac{1}{\ell(Q)}\bigg),
\end{equation}
where the  constant $c$ is independent of $Q$.
If $A\in W$, then
\begin{equation}\label{contains}
\bigcup_{Q\in A(W)} Q\subset B^n(\omega_A,\min\{1,c\ell(A)\}),
\end{equation}
where 
$\omega_A$ is the closest point  in $\partial G$ to $x_A$ and
the constant $c>0$ is independent of $A$.
By \eqref{chain_length} and \eqref{contains} we obtain
\begin{align*}
&\sum_{Q\in A(W)}
\ell(\mathcal{C}(Q^*))^{p-1}|Q| 
\lesssim 
\sum_{Q\in A(W)}
|Q|\,\bigg(1+\log \frac{1}{\ell(Q)}\bigg)^{p-1}\\
&\lesssim \sum_{Q\in A(W)}
|Q|\,\bigg(1+\log^{p}\frac{1}{\ell(Q)}\bigg)\lesssim \sum_{Q\in A(W)}
\int_Q\bigg(1+\log^{p}\frac{1}{\dist(y,\partial G)}\bigg)\,dy\\
&\le \int_{B^n(\omega_A,\min\{1,c\ell(A)\})}  \bigg(1+\log^{p}\frac{1}{\dist(y,\partial G)}\bigg)\,dy.
\end{align*}
Since $\partial G$ is porous in $\R^n$, Lemma \ref{por_lem} yields
\[
\sum_{Q\in A(W)}
\ell(\mathcal{C}(Q^*))^{p-1}|Q| 
\lesssim 
|A|(1+\log^p \ell(A)^{-1})\lesssim |A|^{1-\delta p/n}.
\]
We have verified condition \eqref{pp} in Theorem \ref{thmP}. Hence,
the domain $G$ supports the fractional $(p,p)$-Poincar\'e inequality.
\end{proof}

We state an immediate corollary of Theorem \ref{pp_John}.

\begin{corollary}
Let $G$ be a bounded domain in $\R^n$, $n\ge 2$, and let $1\le p<\infty$,
$\delta,\tau \in (0,1)$.
Then $G$ supports the fractional $(p,p)$-Poincar\'e inequality \eqref{fractionalqp}
if $G$ is a uniform domain or a Lipschitz domain.
\end{corollary}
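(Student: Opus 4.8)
The plan is to deduce this corollary directly from Theorem \ref{pp_John} by verifying that uniform domains and Lipschitz domains are $1$-John domains, so that the fractional $(p,p)$-Poincar\'e inequality follows immediately with the same exponents $1\le p<\infty$ and $\delta,\tau\in(0,1)$. First I would recall the definition of a bounded uniform domain $G$: there is a constant $c\ge 1$ so that any two points $x,y\in G$ can be joined by a rectifiable curve $\gamma$ in $G$ with $\ell(\gamma)\le c\,|x-y|$ and $\min\{\,\ell(\gamma|_{[x,z]}),\ell(\gamma|_{[z,y]})\,\}\le c\,\dist(z,\partial G)$ for every $z$ on $\gamma$. Fixing any base point $x_0\in G$ and taking for each $x\in G$ the uniform curve from $x$ to $x_0$, parametrized by arc length, the cigar condition at the $x$-end gives precisely $\dist(\gamma(t),\partial G)\ge t/c$ for $t$ up to half the length of $\gamma$, while for $t$ past the halfway point one uses boundedness of $G$ together with the cigar condition measured from the $x_0$-end and the fixed lower bound $\dist(x_0,\partial G)>0$; adjusting the constant $c$ this yields the $1$-John condition of Definition \ref{sjohn}. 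Hence every bounded uniform domain is a $1$-John domain, and Theorem \ref{pp_John} applies.

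Next I would treat Lipschitz domains. A bounded Lipschitz domain is, by definition, locally the region above a graph of a Lipschitz function after a rotation, with finitely many such charts covering $\partial G$; it is a standard fact that such a domain is a bounded uniform domain. I would either cite this or sketch it: within a single boundary chart one connects two nearby points by going vertically up to a safe height (comparable to the horizontal distance, with constant controlled by the Lipschitz bound $L$), travelling horizontally, and coming back down, which gives both the length bound and the cigar condition with constants depending only on $L$ and the chart geometry; points that are far apart or in different charts are handled by first retracting into a fixed compact core of $G$, whose distance to $\partial G$ is bounded below, and the finite overlap of charts keeps all constants uniform. Combining this with the previous paragraph, a Lipschitz domain is uniform, hence $1$-John.

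With both reductions in place the proof is complete: in either case $G$ is a bounded $1$-John domain, so Theorem \ref{pp_John} gives the fractional $(p,p)$-Poincar\'e inequality \eqref{fractionalqp} for the stated range of $p$ and $\tau,\delta$. The only real content is the implications ``uniform $\Rightarrow$ $1$-John'' and ``Lipschitz $\Rightarrow$ uniform,'' and the main (mild) obstacle is the bookkeeping of constants: one must be careful that the John constant produced from the uniform cigar condition genuinely controls $\dist(\gamma(t),\partial G)$ for \emph{all} $t\in[0,l]$ and not merely near the endpoint $x$, which is why the fixed base point and the lower bound on its distance to the boundary enter. Since these facts are classical, I would keep the write-up short, citing standard references for ``Lipschitz $\Rightarrow$ uniform'' and giving the brief arc-length reparametrization argument for ``uniform $\Rightarrow$ $1$-John,'' and then invoke Theorem \ref{pp_John}.
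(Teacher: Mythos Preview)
Your proposal is correct and matches the paper's approach exactly: the paper presents this as an immediate corollary of Theorem \ref{pp_John}, relying on the chain of inclusions ``Lipschitz $\Rightarrow$ uniform $\Rightarrow$ $1$-John'' that it records just before Definition \ref{sjohn}. Your write-up simply spells out the standard arguments behind those inclusions, which the paper leaves implicit.
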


It is well known \cite[Theorem 5.1, Lemma 3.1]{B} that
$1$-John domains support Sobolev--Poincar\'e inequalities:
if $1\le p\le q\le np/(n-p)$, $p<n$, then there
is  $c>0$ such that, for
every $u\in W^{1,p}(G)$,
\begin{equation}\label{sobolev_poincare}
\bigg(\int_G |u(x)-u_G|^q\,dx\bigg)^{1/q}\le c\bigg(\int_G |\nabla u(x)|^p\,dx\bigg)^{1/p}.
\end{equation}
We consider
the corresponding fractional Sobolev--Poincar\'e inequalities
on $1$-John domains, Theorem \ref{fractional_sobolev_poincare}.
For the proof of this theorem we need the Riesz potentials
$I_\delta$, $\delta\in (0,n)$, that are defined for suitable $f$ by
\[
I_\delta(f)(x) = \int_{\R^n} \frac{f(y)}{|x-y|^{n-\delta}} dy.
\]
A proof of the following theorem is in \cite[Theorem 1]{He}.

\begin{theorem}\label{hedberg}
Let $0<\delta<n$, $1<p<q<\infty$, and $1/p-1/q=\delta/n$. Then
$||I_\delta(f)||_q \le c||f||_p$ for a constant $c>0$ independent
of $f\in L^p(\R^n)$.
\end{theorem}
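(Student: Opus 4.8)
The plan is to reproduce Hedberg's classical argument, which derives the norm inequality from a pointwise estimate of the Riesz potential by the Hardy--Littlewood maximal function $Mf$ together with an optimization over a splitting radius. Fix $x\in\Rn$ and $r>0$ and split
\[
I_\delta(|f|)(x)=\int_{B^n(x,r)}\frac{|f(y)|}{|x-y|^{n-\delta}}\,dy+\int_{\Rn\setminus B^n(x,r)}\frac{|f(y)|}{|x-y|^{n-\delta}}\,dy=:A(x,r)+B(x,r).
\]
For $A(x,r)$ I would decompose $B^n(x,r)$ into the dyadic annuli $\{2^{-j-1}r\le|x-y|<2^{-j}r\}$, $j\ge 0$, bound $|x-y|^{\delta-n}\lesssim(2^{-j}r)^{\delta-n}$ on the $j$th annulus, and use $\int_{B^n(x,\varrho)}|f(y)|\,dy\lesssim\varrho^n Mf(x)$; summing the resulting geometric series, which converges because $\delta>0$, gives $A(x,r)\lesssim r^{\delta}Mf(x)$. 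For $B(x,r)$ I would apply H\"older's inequality with exponents $p$ and $p'=p/(p-1)$; the exterior integral $\int_{|x-y|\ge r}|x-y|^{(\delta-n)p'}\,dy$ converges precisely because the hypotheses $1/p-1/q=\delta/n$ and $q<\infty$ force $p<n/\delta$, equivalently $(n-\delta)p'>n$, and it equals a constant times $r^{(\delta-n)p'+n}$, so that $B(x,r)\lesssim\|f\|_p\,r^{\delta-n/p}$.

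Combining the two bounds gives $I_\delta(|f|)(x)\lesssim r^{\delta}Mf(x)+r^{\delta-n/p}\|f\|_p$ for every $r>0$. If $Mf(x)=0$ then $f=0$ almost everywhere and there is nothing to prove, so we may assume $Mf(x)>0$ and choose $r=(\|f\|_p/Mf(x))^{p/n}$, which balances the two terms; using the identity $\delta p/n=p(1/p-1/q)=1-p/q$ this yields the pointwise estimate
\[
I_\delta(|f|)(x)\lesssim\|f\|_p^{1-p/q}\,\big(Mf(x)\big)^{p/q}.
\]
Raising this to the power $q$, integrating over $\Rn$, and invoking the Hardy--Littlewood maximal theorem (available because $p>1$) then gives
\[
\|I_\delta(f)\|_q^q\lesssim\|f\|_p^{q-p}\int_{\Rn}\big(Mf(x)\big)^p\,dx\lesssim\|f\|_p^{q-p}\|f\|_p^{p}=\|f\|_p^{q},
\]
and taking $q$th roots completes the proof.

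There is no serious obstacle here; the two points that require care are the verification that the exterior integral in the bound for $B(x,r)$ converges --- this is exactly where the assumption $q<\infty$, equivalently $p<n/\delta$, enters --- and the fact that $p>1$ is indispensable, since the last step rests on the $L^p$-boundedness of the maximal operator, which fails at $p=1$. An alternative would be to establish a weak-type bound at the endpoint together with a strong-type bound for a nearby exponent and interpolate (the original Hardy--Littlewood--Sobolev approach), but Hedberg's maximal-function argument is shorter and entirely self-contained with the tools at hand.
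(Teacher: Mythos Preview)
Your argument is correct; it is precisely Hedberg's original proof. The paper does not give its own proof of this theorem but simply cites \cite[Theorem~1]{He}, so your proposal coincides with the reference the paper invokes.
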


We also need the following chaining lemma. It is
a slight modification
of \cite[Theorem 9.3]{HK}:
we add the new condition 3 but the proof
adapts to our setting, and
we omit the details.

\begin{lemma}\label{chaining_lemma}
Let $G$ in $\R^n$ be a $1$-John domain 
whose $1$-John constant is $c_J>1$.
Fix a number $M>1$. Denote by $x_0\in G$ the $1$-John center of $G$, and let
\[
B_0:=B(x_0,\mathrm{dist}(x_0,\partial G)/4Mc_J).
\]
Then, there is a constant $c>0$, depending on $G$, $M$, and $n$,
as follows:  given $x\in G$ there
is a sequence of balls 
$B_i = B(x_i,r_i)\subset G$, $i=0,1,\ldots$, such that for all $i=0,1,\ldots$, the
following conditions {\em 1--5} hold:
\begin{itemize}
\item[1.] $|B_i\cup B_{i+1}|\le c|B_i\cap B_{i+1}|$;
\item[2.] $\mathrm{dist}(x,B_i)\le cr_i$;
\item[3.] $\mathrm{dist}(B_i,\partial G)\ge Mr_i$;
\item [4.] $|x-x_i|\le cr_i$ and $r_j\to 0$ as $j\to\infty$;
\item [5.] $\sum_{j=0}^\infty \chi_{B_j}\le c\chi_G$.
\end{itemize}
\end{lemma}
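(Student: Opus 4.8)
The plan is to follow the construction in \cite[Theorem 9.3]{HK} and to carry a fixed safety factor, depending on $M$, inside the choice of radii from the outset; the new condition~3 is then almost free, and the only genuinely delicate interaction is between condition~3 and condition~1. Recall the shape of the argument. We may assume $x\neq x_0$ (if $x=x_0$ one simply takes $B_i=B(x_0,2^{-i}\dist(x_0,\partial G)/(4Mc_J))$, which satisfies 1--5 at once). Given $x\in G$, join $x$ to the $1$-John center $x_0$ by a $1$-John curve $\gamma\colon[0,l]\to G$ parametrized by arc length, with $\gamma(0)=x$, $\gamma(l)=x_0$, $l\le c_J$, and $\dist(\gamma(t),\partial G)\ge t/c_J$, as in Definition \ref{sjohn}. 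Traversing $\gamma$ backwards from $t=l$ to $t=0$, one selects parameters $l=t_0>t_1>t_2>\dotsb$ decreasing geometrically to $0$ together with balls $B_i=B(\gamma(t_i),r_i)$, where $r_i$ is a fixed fraction of $\dist(\gamma(t_i),\partial G)$ along most of the curve and, near $x$, a fixed fraction of $|x-\gamma(t_i)|$ (which forces $r_i\to 0$), the parameter spacing being arranged so that consecutive balls $B_i,B_{i+1}$ overlap in a ball of radius comparable to $r_i$. With these choices, condition~1 comes from the strong overlap, conditions~2 and 4 from $\dist(x,B_i)\le|x-\gamma(t_i)|\le t_i\approx r_i\downarrow 0$, and condition~5 because the $B_i$ are non-tangential approach balls along $\gamma$ with geometrically decaying radii, so a fixed point lies in only boundedly many of them.

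To obtain the new condition~3, we would, wherever $r_i$ is chosen above, require in addition that $r_i\le \dist(\gamma(t_i),\partial G)/(M+1)$ (and, near $x$, the corresponding fraction of $|x-\gamma(t_i)|$). Since $x_i=\gamma(t_i)$ and hence $\dist(x_i,\partial G)\ge (M+1)r_i$, this gives
\[
\dist(B_i,\partial G)=\dist(x_i,\partial G)-r_i\ge Mr_i ,
\]
so in particular $B_i\subset G$, which is condition~3. This is compatible with the normalization of $B_0$ in the statement: since $M,c_J>1$ we have $4Mc_J>M+1$, so $\dist(x_0,\partial G)/(4Mc_J)<\dist(x_0,\partial G)/(M+1)$, and the prescribed ball $B_0$ may be taken as the first ball of the chain.

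The main obstacle will be making sure condition~1 survives: shrinking two balls that overlap substantially can leave them disjoint, so the factor $(M+1)^{-1}$ must be present throughout the construction rather than applied to a finished chain. Accordingly we would fix the parameter spacing in tandem with the radii, for instance letting $t_{i+1}$ be the largest $s<t_i$ with $|\gamma(s)-\gamma(t_i)|=r_i/4$; because $\gamma$ and $\dist(\cdot,\partial G)$ are $1$-Lipschitz, the consecutive radii $r_i,r_{i+1}$ are then comparable with a constant depending only on $M$ and $c_J$, the centers lie within $r_i/4$ of each other, and $B_i\cap B_{i+1}$ contains a ball of radius $\gtrsim r_i$, which yields condition~1. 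Conditions~2, 4, and 5 are then verified exactly as in \cite[Theorem 9.3]{HK}, the only difference being that the constant $c$ now additionally depends on $M$; the remaining verifications are routine adaptations of that proof and we omit the details.
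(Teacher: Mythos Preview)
Your proposal is correct and matches the paper's own treatment: the paper does not give a proof either, stating only that the lemma is a slight modification of \cite[Theorem~9.3]{HK} in which condition~3 is added and the proof adapts, with details omitted. You have in fact supplied more of those details than the paper does, in particular the correct observation that the shrinking factor $(M+1)^{-1}$ must be built into the construction from the start so that condition~1 is preserved, and the verification that the prescribed $B_0$ is consistent with this choice.
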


The following result is a fractional Sobolev--Poincar\'e inequality
for $1$-John domains.

\begin{theorem}\label{fractional_sobolev_poincare}
Assume that $G$ is a $1$-John domain in $\R^n$, $n\ge 2$.
Suppose that $\tau,\delta\in (0,1)$, $p<n/\delta$, and
\[
1<p\le q\le \frac{np}{n-\delta p}.
\]
Then $G$ supports
the fractional $(q,p)$-Poincar\'e inequality \eqref{fractionalqp}.
\end{theorem}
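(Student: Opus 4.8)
The plan is to reduce the fractional Sobolev--Poincar\'e inequality to a pointwise estimate of $|u(x)-u_G|$ by a truncated Riesz potential, in the spirit of the classical proof of \eqref{sobolev_poincare}, and then to apply Hedberg's theorem (Theorem \ref{hedberg}). Since $q\le np/(n-\delta p)$, it suffices by H\"older's inequality on the bounded domain $G$ to prove the endpoint case $q=np/(n-\delta p)$, so that $1/p-1/q=\delta/n$. We may also assume $\diam(G)\le 1$.

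First I would fix $x\in G$ and apply the chaining lemma, Lemma \ref{chaining_lemma}, with a large constant $M$ to be chosen, producing balls $B_i=B(x_i,r_i)\subset G$ with $\dist(B_i,\partial G)\ge Mr_i$, $r_i\to 0$, $|x-x_i|\le cr_i$, the doubling-overlap property 1, and the bounded-overlap property 5. Writing $u_{B_i}$ for the average of $u$ over $B_i$, the standard telescoping argument gives, for a.e.\ Lebesgue point $x$,
\[
|u(x)-u_{B_0}|\le \sum_{i=0}^\infty |u_{B_{i+1}}-u_{B_i}|\lesssim \sum_{i=0}^\infty \frac{1}{|B_i\cap B_{i+1}|}\int_{B_i\cup B_{i+1}}|u(y)-u_{B_i\cup B_{i+1}}|\,dy,
\]
and by property 1 and the doubling of Lebesgue measure each term is $\lesssim |B_i|^{-1}\int_{B_i}|u(y)-u_{B_i}|\,dy$. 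Here is where the fractional structure enters: for each ball $B_i$ with $\dist(B_i,\partial G)\ge Mr_i$, Jensen's inequality gives $|B_i|^{-1}\int_{B_i}|u-u_{B_i}|\le |B_i|^{-2}\int_{B_i}\int_{B_i}|u(y)-u(z)|\,dy\,dz$, and since $|y-z|\le 2r_i$ on $B_i\times B_i$ we may insert the weight $|y-z|^{-(n+\delta p)}$ at the cost of $r_i^{n+\delta p}$, then apply H\"older in $(y,z)$ to obtain
\[
\frac{1}{|B_i|}\int_{B_i}|u(y)-u_{B_i}|\,dy\lesssim r_i^{\delta}\left(\int_{B_i}\int_{B_i}\frac{|u(y)-u(z)|^p}{|y-z|^{n+\delta p}}\,dz\,dy\right)^{1/p}=:r_i^{\delta}\,a_i,
\]
provided $M$ is chosen so large that $B_i\subset B^n(y,\tau\dist(y,\partial G))$ for every $y\in B_i$ — this is possible precisely because $\dist(B_i,\partial G)\ge Mr_i$ forces $\dist(y,\partial G)\ge (M-1)r_i$ while $\diam(B_i)=2r_i$, so $2r_i\le \tau(M-1)r_i$ as soon as $M\ge 1+2/\tau$. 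Thus $|u(x)-u_{B_0}|\lesssim \sum_i r_i^\delta a_i$.

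Next I would convert $\sum_i r_i^\delta a_i$ into a Riesz potential of a fixed $L^p$ function. Using property 2, $\dist(x,B_i)\le cr_i$, so $B_i\subset B^n(x,Cr_i)$ and hence for $y\in B_i$ one has $|x-y|\lesssim r_i$, giving $r_i^{\delta}\approx r_i^{\delta-n}\cdot r_i^n\approx r_i^{\delta-n}|B_i|\gtrsim \int_{B_i}|x-y|^{\delta-n}\,dy$ after adjusting constants (more carefully, $r_i^{\delta}=r_i^{\delta-n}|B_i|\,\omega_n^{-1}\gtrsim \int_{B_i}|x-y|^{\delta-n}\,dy$ since $|x-y|\lesssim r_i$ on $B_i$). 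Setting $g(y):=\left(\int_{B^n(y,\tau\dist(y,\partial G))\cap G}\frac{|u(y)-u(z)|^p}{|y-z|^{n+\delta p}}\,dz\right)^{1/p}$, we have $a_i\le |B_i|^{-1/p}\|g\|_{L^p(B_i)}$ is \emph{not} quite what we want; instead I keep $a_i$ as a constant on $B_i$ and write $r_i^\delta a_i\lesssim \int_{B_i} a_i |x-y|^{\delta-n}\,dy$. The bounded-overlap property 5 then yields
\[
|u(x)-u_{B_0}|\lesssim \sum_{i}\int_{B_i}\frac{a_i}{|x-y|^{n-\delta}}\,dy\lesssim \int_{G}\frac{G(y)}{|x-y|^{n-\delta}}\,dy=I_\delta(G\chi_G)(x),
\]
where $G(y)$ dominates each $a_i$ for which $y\in B_i$; the natural choice is $G(y)=c\,g(y)$, because the inner integral defining $a_i$ over $B_i\times B_i$ is bounded by the inner integral over $B^n(y,\tau\dist(y,\partial G))$ for each $y\in B_i$ (again by the choice of $M$), so $a_i^p\le \int_{B_i}\!\!\int_{B^n(y,\tau\dist(y,\partial G))}\!\!\frac{|u(y)-u(z)|^p}{|y-z|^{n+\delta p}}dz\,dy$ and summing with property 5 controls $\|g\|_{L^p(G)}^p$ by a constant times the right-hand side of \eqref{fractionalqp}. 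Then Theorem \ref{hedberg} gives $\|I_\delta(g\chi_G)\|_{L^q(\R^n)}\lesssim \|g\|_{L^p(G)}$, and finally one replaces $u_{B_0}$ by $u_G$ using the already-established $(p,p)$ case (Theorem \ref{pp_John}) or directly, since $|u_{B_0}-u_G|\lesssim \|u-u_G\|_{L^p(G)}\lesssim \|g\|_{L^p(G)}$ and $\|u-u_{B_0}\|_{L^q(G)}\gtrsim \|u-u_G\|_{L^q(G)}-|B_0|\cdot|u_{B_0}-u_G|$-type estimates on the bounded domain close the argument.

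The main obstacle I anticipate is the careful bookkeeping in the Riesz-potential step: making sure the auxiliary function $g$ that appears is genuinely the square-bracket integrand of \eqref{fractionalqp} (with the right truncation radius $\tau\dist(y,\partial G)$ and the right exponent $n+\delta p$), that the bounded-overlap property 5 is used in the correct place to sum the contributions of the balls $B_i$ without losing a logarithmic factor, and that the constant $M$ in Lemma \ref{chaining_lemma} is fixed once and for all depending only on $\tau$ (and $n$) so that every ball $B_i$ satisfies $B_i\subset B^n(y,\tau\dist(y,\partial G))$ for all $y\in B_i$. The restriction $p>1$ is exactly what is needed to invoke Hedberg's theorem, and $p<n/\delta$ guarantees $\delta<n$ and that the endpoint exponent $np/(n-\delta p)$ is finite and larger than $p$, so the reduction to the endpoint case via H\"older on the bounded domain is legitimate.
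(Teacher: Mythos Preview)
Your overall strategy matches the paper's exactly: reduce to the endpoint $q=np/(n-\delta p)$, use the chaining lemma with $M$ large depending on $\tau$ so that each chaining ball satisfies $B_i\subset B^n(y,\tau\dist(y,\partial G))$ for all $y\in B_i$, telescope, bound the telescoping sum pointwise by a Riesz potential $I_\delta(\chi_G g)(x)$ of the function
\[
g(y)=\bigg(\int_{G\cap B^n(y,\tau\dist(y,\partial G))}\frac{|u(y)-u(z)|^p}{|y-z|^{n+\delta p}}\,dz\bigg)^{1/p},
\]
and apply Hedberg's theorem. The replacement of $u_{B_0}$ by $u_G$ is standard, as you indicate.

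There is, however, a genuine gap in your H\"older step. Applying H\"older in \emph{both} variables $(y,z)$, as you do, gives
\[
\frac{1}{|B_i|}\int_{B_i}|u-u_{B_i}|\;\lesssim\; r_i^{\delta-n/p}\bigg(\int_{B_i}\int_{B_i}\frac{|u(y)-u(z)|^p}{|y-z|^{n+\delta p}}\,dz\,dy\bigg)^{1/p}=r_i^{\delta-n/p}\,a_i,
\]
not $r_i^{\delta}a_i$ (recount the exponents: $|B_i|^{-2/p}\cdot r_i^{(n+\delta p)/p}\approx r_i^{\delta-n/p}$). More seriously, the subsequent step in which you need a function ``$G(y)$ dominating each $a_i$ for which $y\in B_i$'' does not close: the pointwise bound $a_i\le c\,g(y)$ for $y\in B_i$ is false in general, since $a_i$ integrates over $B_i$ in \emph{both} variables while $g(y)$ fixes the outer variable at the single point $y$. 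What you actually get is $a_i\le \|g\|_{L^p(B_i)}$, which is not pointwise and does not feed into the Riesz potential.

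The fix --- and this is precisely what the paper does --- is to apply H\"older only in the \emph{inner} variable $z$, leaving the outer $y$-integral untouched:
\[
\frac{1}{|B_i|}\int_{B_i}|u(y)-u_{B_i}|\,dy\le \frac{1}{|B_i|}\int_{B_i}\bigg(\frac{1}{|B_i|}\int_{B_i}|u(y)-u(z)|^p\,dz\bigg)^{1/p}\!dy\;\lesssim\; |B_i|^{\delta/n-1}\int_{B_i}g(y)\,dy.
\]
Now the integrand is already the pointwise function $g$. Since $|x-y|\lesssim r_i$ for $y\in B_i$ (condition 2), one has $|B_i|^{\delta/n-1}\lesssim |x-y|^{\delta-n}$ on $B_i$, so each term is $\lesssim \int_{B_i}g(y)\,|x-y|^{\delta-n}\,dy$; summing with the bounded-overlap condition 5 gives $|u(x)-u_{B_0}|\lesssim I_\delta(\chi_G g)(x)$ cleanly. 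With this single correction your argument goes through and coincides with the paper's.
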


\begin{proof}
By H\"older's inequality we may assume that $q=np/(n-\delta p)$.
Fix $\tau\in (0,1)$ and let $u\in L^p(G)$. 
Let  $x\in G$ be a  Lebesgue point of $u$, and consider the
associated balls $B_i=B(x_i,r_i)$ from Lemma \ref{chaining_lemma}
satisfying conditions 1--5 with $M>2/\tau$.

The following holds: for all $i$,
\begin{equation}\label{ball_incl}
B_i\subset B^n(y,\tau\dist (y,\partial G)),\qquad \text{ if }y\in B_i.
\end{equation}
Namely,
let us fix $y\in B_i$ and let $z$ be any point in $B_i\,$.
Then, by condition 3 in Lemma \ref{chaining_lemma},
\begin{align*}
\vert z-y\vert
&\le \vert y-x_i\vert+\vert x_i-z\vert
\le 2r_i\le 2\frac{\dist (B_i,\partial G)}{M}\\
&\le \frac{2}{M}\dist (y,\partial G) <\tau\dist (y,\partial G).
\end{align*}
By the Lebesgue differentiation theorem and condition 4 in Lemma \ref{chaining_lemma},
\begin{equation*}
u(x)=\lim_{i\to\infty}\frac{1}{\vert B_i\vert}\int_{B_i}u(y)\,dy=\lim_{i\to\infty} u_{B_i}\,.
\end{equation*}
Hence, by condition 1 in Lemma \ref{chaining_lemma}, we obtain
\begin{align*}
\vert u(x)-u_{B_0}\vert
&\le\sum_{i=0}^{\infty}\vert u_{B_i}-u_{B_{i+1}}\vert\\
&\le\sum_{i=0}^{\infty}\Bigl(\vert u_{B_i}-u_{B_i\cap B_{i+1}}\vert
+\vert u_{B_{i+1}}-u_{B_i\cap B_{i+1}}\vert\Bigr)\\
&\lesssim\sum_{i=0}^{\infty}
\frac{1}{|B_i|}\int_{B_i}\vert u(y)-u_{B_i}\vert\,dy\,.
\end{align*}
For a ball $B_i$,
\begin{equation}\label{ball_est}
\begin{split}
\frac{1}{\vert B_i\vert}\int_{B_i}\vert u(y)-u_{B_i}\vert\,dy
&=\frac{1}{\vert B_i\vert}\int_{B_i}
\bigg\vert 
\frac{1}{\vert B_i\vert}
\int_{B_i} ( u(y)-u(z))\,dz\bigg\vert\,dy\\
&\le \frac{1}{\vert B_i\vert}\int_{B_i}
\biggl(
\frac{1}{\vert B_i\vert}
\int_{B_i}\vert u(y)-u(z)\vert^p\,dz\biggr)^{1/p}\,dy\\
&=\frac{1}{\vert B_i\vert^{1+1/p}}
\int_{B_i}\biggl(
\int_{B_i}\vert u(y)-u(z)\vert^p\,dz\biggr)^{1/p}\,dy\\
&\lesssim
\vert B_i\vert^{\delta /n -1}\int_{B_i}\biggl(\int_{B_i}
\frac{\vert u(y)-u(z)\vert^p}{\vert y-z\vert^{n+\delta p}}\,dz\biggr)^{1/p}\,dy\,.
\end{split}
\end{equation}
Let us write
\begin{equation*}
g(y):=\biggl(\int_{G\cap B^n(y,\tau\dist(y,\partial G))}\frac{\vert u(y)-u(z)\vert ^p}{\vert y-z\vert ^{n+\delta p}}\,dz\biggr)^{1/p}\,.
\end{equation*}
By \eqref{ball_est}, \eqref{ball_incl} and condition 2 in Lemma \ref{chaining_lemma},
\begin{align*}
\sum_{i=0}^{\infty}\frac{1}{\vert B_i\vert}
\int_{B_i}\vert u(y)-u_{B_i}\vert \,dy
&\lesssim
\sum_{i=0}^{\infty}\vert B_i\vert^{\delta /n-1}\int_{B_i}
\biggl(\int_{B_i}\frac{\vert u(y)-u(z)\vert ^p}{\vert y-z\vert ^{n+\delta p}}\,dz\biggr)^{1/p}\,dy
\\
&\le
\sum_{i=0}^{\infty}\vert B_i\vert^{\delta /n-1}\int_{B_i}
\biggl(\int_{B^n(y,\tau\dist(y,\partial G))}\frac{\vert u(y)-u(z)\vert ^p}{\vert y-z\vert ^{n+\delta p}}\,dz\biggr)^{1/p}\,dy\\
&\lesssim
\sum_{i=0}^{\infty}r_i^{n(\delta /n-1)}\int_{B_i}g(y)\,dy\\
&\lesssim	
\sum_{i=0}^{\infty}\int_{B_i}\frac{g(y)}{\vert x-y\vert ^{n-\delta}}\,dy\,.
\end{align*}
By condition 5 in Lemma \ref{chaining_lemma},
\begin{equation}
\vert u(x)-u_{B_0}\vert\lesssim
\int_{G}\frac{g(y)}{\vert x-y\vert ^{n-\delta}}\,dy=I_{\delta}(\chi_G g)(x)
\end{equation}
for every Lebesgue point $x\in G$.
By integrating this inequality and using
Theorem \ref{hedberg}, we obtain
\begin{align*}
\biggl(\int_G \vert u(x)-u_{B_0}\vert ^{q}\,dx\biggr)^{1/q}
&\lesssim
\|I_\delta(\chi_G g)\|_q \lesssim \|\chi_G g\|_p\\
&=
\biggl(\int_{G}\int_{G\cap B^n(y,\tau\,\dist(y,\partial G))}\frac{\vert u(y)-u(z)\vert ^p}{\vert y-z\vert ^{n+\delta p}}\,dz\,dy\biggr)^{1/p}\,.
\end{align*}
Inequality \eqref{fractionalqp} follows.
\end{proof}

\begin{remark}\label{remaining_results}
The proof
of Theorem \ref{fractional_sobolev_poincare} 
also gives the following result:
Suppose that $G$ is a $1$-John domain in $\R^n$. Let
$\tau,\delta\in (0,1)$ and let $p,q\in [1,\infty)$ be such that
\[
0\le 1/p-1/q<\delta/n.
\]
Then $G$ supports
the fractional $(q,p)$-Poincar\'e inequality \eqref{fractionalqp}.
Indeed, it suffices to recall that the  linear operator $f\mapsto I_\delta(\chi_G f)$
is bounded from $L^p(G)$ to $L^q(G)$,
\cite[Lemma 7.12]{GT}.
\end{remark}

\section{Positive results for $s$-John domains with $s>1$}

We prove
the fractional $(1,p)$-Poincar\'e inequality
\eqref{fractionalqp} for $s$-John domains,
Theorem \ref{sharp}.  We show in  Section \ref{sharp_sec}
that this result is sharp in terms of the restriction on $p$,
Theorem \ref{1p_counter}.

\begin{theorem}\label{sharp} Let
$s>1$, $1<p<\infty$, $\lambda \in [n-1,n)$, and let $\delta,\tau\in (0,1)$. 
Suppose that
\begin{equation}\label{oletus}
s<\frac{n+1-\lambda}{1-\delta},\quad p>\frac{s(n-1)-\lambda+1}{n-s(1-\delta)-\lambda+1}.
\end{equation}
Let $G$ be an $s$-John domain in $\R^n$ 
such that $\mathrm{dim}_{\mathcal{M}}(\partial G)\le \lambda$.
Then $G$ supports the fractional $(1,p)$-Poincar\'e inequality \eqref{fractionalqp}.
\end{theorem}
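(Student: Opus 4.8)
The plan is to apply Theorem \ref{thmP}(1) with $q=1$. Since then $\ell(\C(Q^*))^{q-1}=1$, $q(\delta/n-1/p)=\delta/n-1/p$ and $p/(p-q)=p':=p/(p-1)$, it suffices to find a chain decomposition of $G$ for which
\[
\sum_{A\in W}\Bigl(|A|^{\delta/n-1/p}\sum_{Q\in A(W)}|Q|\Bigr)^{p'}<\infty .
\]
I would use the chain decomposition of the $s$-John domain $G$ constructed in Section \ref{pos_1_John}, and assume $\diam(G)\le 1$. The first step is the crude ``shadow'' bound: if $Q\in A(W)$ then $x_A\in\gamma_Q$, say $x_A=\gamma_Q(t)$, and \eqref{esg} together with \eqref{dist_est} gives $t\lesssim\ell(A)^{1/s}$; since $\gamma_Q$ is parametrized by arc length, $|x_Q-x_A|\le t$, and a further use of \eqref{dist_est} yields $\ell(Q)\lesssim\ell(A)^{1/s}$, so $Q\subset B^n(x_A,c\ell(A)^{1/s})$ and hence $\sum_{Q\in A(W)}|Q|\le|B^n(x_A,c\ell(A)^{1/s})|\lesssim|A|^{1/s}$. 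This alone only reproves the case $n=2$; the point is to use it merely to \emph{linearize}: $\bigl(\sum_{Q\in A(W)}|Q|\bigr)^{p'}\lesssim|A|^{(p'-1)/s}\sum_{Q\in A(W)}|Q|$, and then interchange the order of summation. Writing $c_0:=p'(\delta/n-1/p)+(p'-1)/s$, the series above is dominated by
\[
\sum_{A\in W}|A|^{c_0}\sum_{Q\in A(W)}|Q|=\sum_{Q\in W}|Q|\sum_{A^*\in\C(Q^*)}|A|^{c_0},
\]
the inner set being exactly the cubes of the chain of $Q$.

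The core step is to estimate $\sum_{A^*\in\C(Q^*)}|A|^{c_0}$ for a fixed $Q$ with $\ell(Q)=2^{-k}$. Here I would establish, from \eqref{esg} and \eqref{dist_est}, two structural facts about the chains built in Section \ref{pos_1_John}: (a) every cube in $\C(Q^*)$ has side length at least $c\,\ell(Q)^{s}$, because the underlying John curve $\gamma$ of $Q$ leaves the neighbourhood of $Q$ only after arc length $\gtrsim\ell(Q)$, while it meets a cube of side $2^{-m}$ at John parameter $\lesssim 2^{-m/s}$; and (b) for each scale $2^{-m}\in[c\ell(Q)^{s},1]$ the chain contains at most $C_n\,2^{m(s-1)/s}$ cubes of side $2^{-m}$, since all such cubes are met by $\gamma$ inside a curve segment of arc length $\lesssim 2^{-m/s}$, and a curve of length $L$ meets $\lesssim L/\ell$ cubes of side $\ell$. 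Consequently, with $m$ ranging over the $\lesssim sk$ admissible scales,
\[
\sum_{A^*\in\C(Q^*)}|A|^{c_0}\lesssim\sum_{m}2^{m(s-1)/s}\,2^{-mnc_0}=\sum_{m}2^{m\bigl((s-1)/s-nc_0\bigr)},
\]
which is $\lesssim 1$ when $nc_0>(s-1)/s$ and $\lesssim\ell(Q)^{\,snc_0-(s-1)}$ otherwise.

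Finally I would sum over $Q$. In the first case $\sum_{Q}|Q|\cdot 1=|G|<\infty$. In the second, grouping by side length and using that $\#\{Q\in W:\ell(Q)=2^{-k}\}\lesssim 2^{k\lambda'}$ for each $\lambda'\in(\lambda,n)$ (these Whitney cubes lie in $(\partial G)_{c2^{-k}}$ and $\dim_{\M}(\partial G)\le\lambda$), one gets $\sum_{Q}|Q|\ell(Q)^{snc_0-(s-1)}\lesssim\sum_{k}2^{k\lambda'}2^{-k(n+snc_0-(s-1))}$, which converges as soon as $snc_0>\lambda'-n+(s-1)$; letting $\lambda'\downarrow\lambda$ this becomes $snc_0>\lambda-n+(s-1)$. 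Substituting $nc_0=p'\delta-\tfrac{n(s-1)}{s(p-1)}$, multiplying through, and using that $n-\lambda+1-s(1-\delta)>0$ by the first hypothesis, this inequality is exactly $p>\dfrac{s(n-1)-\lambda+1}{n-\lambda+1-s(1-\delta)}$, the second hypothesis. Theorem \ref{thmP}(1) then yields \eqref{fractionalqp}.

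I expect the main obstacle to be the core step: deriving facts (a) and (b) for the specific chains of Section \ref{pos_1_John} from the $s$-John inequality \eqref{esg}, and then recognizing that the resulting two-case estimate, once fed back through the summation over $Q$ and the Minkowski-dimension count of Whitney cubes, collapses precisely to the numerology of the hypotheses. The bookkeeping with the exponents $nc_0$, $(s-1)/s$ and $\lambda$, together with the case distinction, is delicate, and it is exactly here that the condition $s<\tfrac{n+1-\lambda}{1-\delta}$ (positivity of the relevant denominator) and the lower bound on $p$ are used.
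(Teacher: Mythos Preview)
Your argument is correct and reaches the sharp conclusion, but it takes a genuinely different route from the paper's proof. The paper never swaps the order of summation. Instead, it keeps the sum indexed by $A$ and invokes Lemma~\ref{sest} (quoted from \cite{HH-SV}), which stratifies each scale $W_j$ into families $W_{j,k,\sigma}$ according to the size of $|\cup A(W)|$ and supplies the cardinality bound $\sharp W_{j,k,\sigma}\lesssim 2^{-kn}2^{j(n+1+(\lambda-n-1)/s)}$; a direct double sum over $j$ and $k$ then yields the convergence. Your proof replaces that black-box lemma by two elementary chain estimates, your facts (a) and (b), derived straight from \eqref{esg}, and uses the crude shadow bound $\sum_{Q\in A(W)}|Q|\lesssim |A|^{1/s}$ only to linearize the $p'$-th power before switching to summation over $Q$. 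The upshot is that your proof is self-contained (no appeal to \cite{HH-SV}) and transparently shows where each hypothesis enters; the paper's approach, on the other hand, packages the geometric input into a single quotable counting lemma that can be reused for other exponents.

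Two small points worth tightening. First, in the borderline case $nc_0=(s-1)/s$ your inner sum is $\lesssim \log(1/\ell(Q))$ rather than $\ell(Q)^{0}$; this is harmless, since the resulting series $\sum_k k\,2^{k(\lambda'-n)}$ still converges because $\lambda'<n$, but it should be said. Second, your fact~(b) is really a count of dyadic $2^{-m}$-cubes met by a curve of length $\lesssim 2^{-m/s}$ (not just midpoints on the curve); the standard covering argument gives $\lesssim 2^{-m/s}/2^{-m}+1\lesssim 2^{m(s-1)/s}$, exactly as you use.
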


We need  preparations for the proof of Theorem \ref{sharp}.

By scaling we may assume that $\diam(G)\le 1$. Hence,
the side lengths of all Whitney cubes in $W=W_G$ are bounded by one
and 
\begin{equation}\label{w_identity}
W=\bigcup_{j=0}^\infty W_j,
\end{equation}
where each
$W_j$ stands for the family of cubes $A\in W$ with
$\ell(A)=2^{-j}$.

For a given $s$-John domain $G$, we
consider its chain decomposition that is constructed in Section \ref{pos_1_John}.
Given $j,k\in\N$ and $\sigma\ge 1$ we define
\[
W_{j,k,\sigma}:=
\{A\in W_j\mid 2^{-(j-k)n}\le |\cup A(W)\,|\le \sigma\cdot 2^{-(j-k-1)n}\}.
\]
The following
lemma from \cite[Lemma 4.7]{HH-SV} gives the properties we need for
this chain decomposition of $G$.

The integer part of $\alpha\in\R$ is denoted by $[\alpha]$.

\begin{lemma}\label{sest}
Let $s>1$ and let $G$ be an $s$-John domain in $\R^n$ 
such that $\diam(G)\le 1$ and $\mathrm{dim}_{\mathcal{M}}(\partial G)< \lambda\in [n-1,n)$.
Then, there is a constant $\sigma\ge 1$ such that
\begin{equation}\label{kirjoitus}
 W_j= \bigcup_{k=0}^{[j-j/s]}  W_{j,k,\sigma}\qquad \text{ for every } j\in \N.
\end{equation}
Furthermore, if $k\in \{0,1,\ldots,[j-j/s]\}$,  then
\begin{equation}\label{sid}
\sharp W_{j,k,\sigma}\le c2^{-kn} 2^{j(n+1+(\lambda-n-1)/s)}.
\end{equation}
The positive constant $c$ depends on $n$, $s$, $\partial G$, and the $s$-John constant
of the domain $G$.
\end{lemma}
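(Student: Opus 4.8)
The plan is to distill from the $s$-John chain construction a single localization estimate and then run two counting arguments. \emph{Step 1: the localization estimate.} Fix $j\ge 1$, a cube $A\in W_j$ (so $\ell(A)=2^{-j}$) and a cube $Q\in A(W)$, i.e. $A^*\in\mathcal{C}(Q^*)$. By the construction of the chain decomposition this means $x_A=\gamma_Q(t_A)$ for some $t_A\in[0,\ell(\gamma_Q)]$; since $x_A\in A\subset A^*$, estimate \eqref{dist_est} gives $\dist(x_A,\partial G)\le 6\sqrt n\,2^{-j}$, so the $s$-John estimate \eqref{esg} forces $t_A^s/c\le 6\sqrt n\,2^{-j}$, that is $t_A\lesssim 2^{-j/s}$. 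As $\gamma_Q$ is parametrized by arclength with $\gamma_Q(0)=x_Q$, this yields $|x_Q-x_A|\le t_A\lesssim 2^{-j/s}$, and then (using $2^{-j}\le 2^{-j/s}$, the triangle inequality, and \eqref{dist_est} to compare $\ell(Q)$ with $\dist(x_Q,\partial G)$) there is a constant $c_0=c_0(n,s,\text{$s$-John constant})$ with
\[
|x_Q-x_A|\le c_0 2^{-j/s},\qquad \ell(Q)\le c_0 2^{-j/s},\qquad \dist(x,\partial G)\le c_0 2^{-j/s}\ \ (x\in Q),
\]
for every $A\in W_j$ and $Q\in A(W)$. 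In particular $\bigcup_{Q\in A(W)}Q\subset B^n(x_A,2c_0 2^{-j/s})$, so $|\cup A(W)|\le |B^n(0,1)|\,(2c_0)^n 2^{-jn/s}$.

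\emph{Step 2: proof of \eqref{kirjoitus}.} Put $\sigma:=\max\{1,|B^n(0,1)|(2c_0)^n\}$, so that $|\cup A(W)|\le\sigma 2^{-jn/s}$ for all $A\in W_j$, while $|\cup A(W)|\ge|A|=2^{-jn}$ since $A\in A(W)$. Given $A\in W_j$, let $k_A$ be the least integer $\ge 0$ with $|\cup A(W)|\le\sigma 2^{-(j-k_A-1)n}$; it exists because $|\cup A(W)|\le|G|<\infty$. Then $A\in W_{j,k_A,\sigma}$: the upper bound is the definition of $k_A$, and the lower bound $|\cup A(W)|\ge 2^{-(j-k_A)n}$ holds because for $k_A=0$ it reads $|\cup A(W)|\ge 2^{-jn}$, while for $k_A\ge 1$ minimality gives $|\cup A(W)|>\sigma 2^{-(j-k_A)n}\ge 2^{-(j-k_A)n}$. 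Finally $k_A\le[j-j/s]$, since $j-[j-j/s]-1<j/s$ implies $\sigma 2^{-(j-[j-j/s]-1)n}>\sigma 2^{-jn/s}\ge|\cup A(W)|$, so $k=[j-j/s]$ already meets the defining inequality for $k_A$. Together with the trivial reverse inclusion this proves \eqref{kirjoitus}.

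\emph{Step 3: proof of \eqref{sid}.} First, a chain-multiplicity bound: for each $Q\in W$ the midpoints $\{x_A:A\in W_j,\ A^*\in\mathcal{C}(Q^*)\}$ all lie, by Step 1, on the sub-arc $\gamma_Q([0,\min\{c_0 2^{-j/s},\ell(\gamma_Q)\}])$, whose length is $\le c_0 2^{-j/s}$; since distinct dyadic cubes of sidelength $2^{-j}$ have midpoints at mutual distance $\ge 2^{-j}$, ordering these midpoints by their arclength parameter and bounding Euclidean distances by arclengths yields $\sharp\{A\in W_j:A^*\in\mathcal{C}(Q^*)\}\le 1+c_0 2^{j-j/s}\le c_1 2^{j-j/s}$. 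Next set $\mathcal{Q}_{j,k}:=\bigcup_{A\in W_{j,k,\sigma}}A(W)$; by Step 1 every cube of $\mathcal{Q}_{j,k}$ lies in $\partial G+B^n(0,2c_0 2^{-j/s})$, and since $\mathrm{dim}_{\mathcal{M}}(\partial G)<\lambda<n$ we have $\limsup_{r\to 0+}\mathcal{M}_\lambda(\partial G,r)<\infty$, so $|\cup\mathcal{Q}_{j,k}|\le \mathcal{M}_\lambda(\partial G,2c_0 2^{-j/s})(2c_0 2^{-j/s})^{n-\lambda}\le c_2 2^{-j(n-\lambda)/s}$ for $j$ large, and (after enlarging the constant, using $|\cup\mathcal{Q}_{j,k}|\le|G|$) for all $j$. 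Then, using disjointness of Whitney interiors, Tonelli, and the two bounds just obtained,
\begin{align*}
\sharp W_{j,k,\sigma}\,2^{-(j-k)n}
&\le\sum_{A\in W_{j,k,\sigma}}|\cup A(W)|\le\sum_{A\in W_{j,k,\sigma}}\sum_{Q\in A(W)}|Q|\\
&=\sum_{Q\in\mathcal{Q}_{j,k}}|Q|\,\sharp\{A\in W_{j,k,\sigma}:A^*\in\mathcal{C}(Q^*)\}\le c_1 2^{j-j/s}\,|\cup\mathcal{Q}_{j,k}|\le c_1c_2\,2^{j-j/s}2^{-j(n-\lambda)/s}.
\end{align*}
Solving for $\sharp W_{j,k,\sigma}$ and using $(j-k)n+j-\tfrac js-\tfrac{j(n-\lambda)}{s}=j\big(n+1+\tfrac{\lambda-n-1}{s}\big)-kn$ gives exactly \eqref{sid}.

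\emph{Main obstacle.} The only non-routine point is Step 1: one has to rule out that the chain $\mathcal{C}(Q^*)$ contains cubes whose midpoints are far from $x_A$ on scale $2^{-j/s}$, and this is precisely where \eqref{esg} is used — inverted, it says ``$\dist(\cdot,\partial G)\approx 2^{-j}$ occurs only at arclength parameter $\lesssim 2^{-j/s}$''. Everything after Step 1 is bookkeeping with \eqref{dist_est}, Tonelli, and the definition of the upper Minkowski dimension.
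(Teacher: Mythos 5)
Your proof is correct. Note that the paper itself gives no argument for this lemma --- it is quoted verbatim from \cite[Lemma 4.7]{HH-SV} --- so there is no in-paper proof to compare against; your three steps (localizing each chain $\mathcal{C}(Q^*)$ near its endpoint via the inverted $s$-John estimate \eqref{esg}, choosing $\sigma$ from the resulting uniform bound $|\cup A(W)|\lesssim 2^{-jn/s}$, and then counting via the chain-multiplicity bound together with the Minkowski precontent of $\partial G$) supply exactly the missing ingredients, with the constants carrying the stated dependencies.
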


We are ready for the proof of Theorem \ref{sharp}.

\begin{proof}[{\it Proof of Theorem \ref{sharp}}]
Choose $\lambda'\in (\lambda,n)$ such that
\eqref{oletus} is true if $\lambda$ is replaced
by $\lambda'$. Then
$\mathrm{dim}_{\mathcal{M}}(\partial G)<\lambda'$ and hence we may assume
that  $\mathrm{dim}_{\mathcal{M}}(\partial G)$ is
strictly less than $\lambda\in [n-1,n)$.

By Theorem \ref{thmP}
it is enough to prove the finiteness of
\[
\Sigma := \sum_{A\in W} \bigg(\sum_{Q\in A({W})}
|Q|\, |A|^{\delta/n-1/p}\bigg)^{p/(p-1)}
=\sum_{A\in W}\big( |\cup A(W)\,|\, |A|^{\delta/n-1/p} \big)^{p/(p-1)},
\]
where the chain decomposition of $G$ is given by Lemma \ref{sest}.
By \eqref{w_identity} and \eqref{kirjoitus} in Lemma \ref{sest}
\begin{align*}
\Sigma &=\sum_{j=0}^\infty 
\sum_{k=0}^{[j-j/s]} 
\sum_{A\in W_{j,k,\sigma}} \big(  |\cup A(W)\,|\,  |A|^{\delta/n-1/p}\big)^{p/(p-1)}.
\end{align*}
Then, by using the definition of $ W_{j,k,\sigma}$ and \eqref{sid} 
from Lemma \ref{sest} we obtain
the estimate
\begin{align*}
\Sigma&\lesssim \sum_{j=0}^\infty \sum_{k=0}^{[j-j/s]} 2^{-kn} 2^{j(n+1+(\lambda-n-1)/s)}\cdot \big( 2^{-(j-k)n}\cdot 2^{-jn(\delta/n-1/p)}\big)^{p/(p-1)}\\
&= \sum_{j=0}^\infty \sum_{k=0}^{[j-j/s]}  2^{kn(p/(p-1)-1)} 2^{j(n+1+(\lambda-n-1)/s-np/(p-1)-\delta p/(p-1)+n/(p-1))}.
\end{align*}
Let us fix $j$ and $k$ as in the summation above. Then,
\[
kn\bigg(\frac{p}{p-1}-1\bigg) \le n(j-j/s)\bigg(\frac{p}{p-1}-1\bigg)
=\frac{jn(1-1/s)}{p-1}.
\]
The trivial estimate $[j-j/s]\le j$ implies that
\begin{align*}
\Sigma &\lesssim \sum_{j=0}^\infty j\cdot    2^{j(n(1-1/s)/(p-1)+n+1+(\lambda-n-1)/s-np/(p-1)-\delta p/(p-1)+n/(p-1))}\\
&=\sum_{j=0}^\infty j\cdot    2^{j(ns-s+\lambda p-\lambda-np-p+1-p(\delta-1)s)/s(p-1)}.
\end{align*}
By \eqref{oletus}
the last series converges.
\end{proof}

\section{Sharpness of Theorem \ref{sharp}}\label{sharp_sec}

We show
that Theorem \ref{sharp} is sharp
by proving Theorem \ref{1p_counter}. For this purpose we construct $s$-John domains which do not support 
the fractional $(1,p)$-Poincar\'e inequality \eqref{fractionalqp}
for certain values of $p$.

Let us recall the construction of the $s$-version of a given $1$-John domain $G$,
\cite{HH-SV}.
We
may assume that the diameter of $G$ is restricted by condition
\begin{equation}\label{w_restr}
(\ell(Q)/8)^s \le \ell(Q)/32,\qquad \text{ if }Q\in W_G.
\end{equation}
Let $Q$ be a closed cube in $\R^n$  centered at $x=(x_1,\ldots,x_n)$ and
whose side length $\ell=\ell(Q)$ satisfies 
$(\ell/8)^s\le \ell/32$. 
Thus,
$Q=\prod_{i=1}^n\,[x_i-\ell/2,x_i+\ell/2]$.
The {\em room} in $Q$ is the open cube
\[
R(Q):=\mathrm{int}(\frac{1}{4}Q)=\prod_{i=1}^{n} (x_i-\ell/8,x_i+\ell/8)
\]
centered at $x$ with side length $\ell/4$.
The {\em $s$-passage} in $Q$ is the open set
\[
P_s(Q):= \bigg(\prod_{i=1}^{n-1} \big(x_i-(\ell/8)^s,x_i+(\ell/8)^s \big)\bigg)  \times 
(x_n+\ell/8,x_n+\ell/4).
\]
Since
 $(\ell/8)^s < \ell/8$, we have $P_s(Q)\subset \frac{1}{2}Q$.
The {\em long $s$-passage} in $Q$ is the open set
\[
L_s(Q):= \bigg(\prod_{i=1}^{n-1} \big(x_i-(\ell/8)^s,x_i+(\ell/8)^s \big)\bigg)  \times 
(x_n,x_n+\ell/2)\subset Q.
\]
The {\em $s$-apartment} in $Q$ is the set
\begin{equation}\label{sap}
A_s(Q):= L_s(Q)\cup (Q\setminus (\partial R(Q)\cup \partial P_s(Q)))\subset Q.
\end{equation}

\begin{definition}\label{s_version}
Let $G$ in $\Rn$ be a $1$-John domain and let $s>1$ be 
a number such that \eqref{w_restr} holds.
Then,
the {\em s-version of $G$} is the domain
\[
G_s := Q_0\cup \bigcup_{\substack{Q\in W_G\\Q\not=Q_0}} A_s(Q).
\]
Here
$Q_0\in W_G$ is the cube containing the $1$-John center $x_0$ of $G$.
\end{definition}
%


We 
construct test functions. Let $Q\in W_G$ be fixed, and
define
the {\em tiny $s$-passage} in $Q$ to be the open set
\[
T_s(Q):=
 \bigg(\prod_{i=1}^{n-1} \big(x_i-(\ell/8)^s,x_i+(\ell/8)^s \big)\bigg)  \times 
(x_n+5\ell/32,x_n+7\ell/32).
\]
Then, we define a continuous function
\[u^{A_s(Q)}\colon G_s\to \R\]
which has linear decay along the $n^{\text{\tiny th}}$  variable in $T_s(Q)$
and 
is constant in both components of $P_s(Q)\setminus T_s(Q)$, and satisfies
\begin{equation}\label{values}
u^{A_s(Q)}(x) =\begin{cases}
\ell(Q)^{(\lambda-n)/q},\qquad &\text{ if }x\in R(Q);\\
0,\qquad &\text{ if } x\in G_s\setminus (R(Q)\cup P_s(Q)).
\end{cases}
\end{equation}
In the sense of distributions in $G_s$,
\begin{equation}\label{gradient}
\nabla u^{A_s(Q)} = (0,\ldots,0,-16\ell(Q)^{(\lambda-n)/q-1}\chi_{T_s(Q)})
\end{equation}
pointwise almost everywhere.

The  reason why we do not let $u^{A_s(Q)}$ have
linear decay along the whole $s$-passage $P_s(Q)$ is that
we need the following property.

\begin{remark}\label{loc_rem}
Let $Q\in W_G$. Suppose that
$x\in G_s$ and $y\in B^n(x,\dist(x,\partial G_s))$ are such that
\[|u^{A_s(Q)}(x)-u^{A_s(Q)}(y)|\not=0.\] Then
$x$ and $y$ both belong to $P_s(Q)$.
This fact follows from the
assumption \eqref{w_restr}.
\end{remark}

The following proposition is the main tool for
proving Theorem \ref{1p_counter}.

\begin{prop}\label{sharp_counter}
Let $G$ be a $1$-John domain in $\R^n$ and $s>1$
be
such that \eqref{w_restr} holds. Suppose that
 \[
 \limsup_{k\to \infty} 2^{-\lambda k}\cdot\sharp  W_k
>0,\qquad \text{ where }\lambda=\mathrm{dim}_{\mathcal{M}}(\partial G)\in [n-1,n).
 \] 
Let
 $\delta,\tau\in (0,1)$ and
$1\le q<p<\infty$ be such that
\begin{equation}\label{rels}
\frac{(p-q)(\lambda-n)}{pq} + \frac{(s-1)(n-1)}{p} \ge 1-s(1-\delta).
\end{equation}
 Then
the $s$-version of $G$  
is an $s$-John domain 
with $\mathrm{dim}_{\mathcal{M}}(\partial G_s)=\lambda$
and $G_s$ does not support
the fractional $(q,p)$-Poincar\'e inequality \eqref{fractionalqp}.
\end{prop}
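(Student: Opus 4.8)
The strategy is to exhibit, for each admissible index, a test function making the ratio in \eqref{fractionalqp} blow up. The natural candidates are sums of the elementary functions $u^{A_s(Q)}$ defined in \eqref{values}--\eqref{gradient}, summed over all Whitney cubes $Q$ of a fixed generation $W_k$ of the original $1$-John domain $G$. The key geometric input is Remark~\ref{loc_rem}: because the $u^{A_s(Q)}$ have pairwise disjoint "active regions" $P_s(Q)$, and because any interaction in the fractional seminorm localised to balls $B^n(x,\tau\dist(x,\partial G_s))$ is confined to a single $P_s(Q)$, the fractional $p$-energy of the sum $u_k := \sum_{Q\in W_k} u^{A_s(Q)}$ decouples into a sum over $Q\in W_k$ of individual energies. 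Likewise, the $L^q$-deviation of $u_k$ from its mean is bounded below by a sum of the $L^q$-masses carried by the rooms $R(Q)$, since the rooms of generation $k$ are well separated and the mean $(u_k)_{G_s}$ is small (one controls $|(u_k)_{G_s}|$ using that the total measure of $\bigcup_{Q\in W_k} R(Q)$ is a small fraction of $|G_s|$ when $k$ is large, by the density-of-Whitney-cubes estimate).

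\textbf{Key steps.} First I would verify the claim $\dim_{\mathcal M}(\partial G_s)=\lambda$ and that $G_s$ is an $s$-John domain; these are established in \cite{HH-SV} and I would simply cite them, noting that the hypothesis $\limsup_k 2^{-\lambda k}\sharp W_k>0$ is what pins the dimension from below. Second, fix a generation $k$ along the subsequence realising the $\limsup$, so $\sharp W_k \gtrsim 2^{\lambda k}$, and set $u_k=\sum_{Q\in W_k}u^{A_s(Q)}$. Compute the numerator: by \eqref{gradient} and a one-dimensional Poincaré/telescoping estimate along the tiny passage $T_s(Q)$ (whose cross-section has $(n-1)$-measure $\asymp (2^{-k}/8)^{s(n-1)}$ and whose length is $\asymp 2^{-k}$), together with $|x-y|\asymp 2^{-k}$ on the relevant pairs, one gets that the fractional $p$-energy of $u^{A_s(Q)}$ is of order
\[
2^{-ks(n-1)}\cdot 2^{-k}\cdot \big(2^{k(1-(\lambda-n)/q)}\big)^{p}\cdot 2^{k\delta p}\cdot 2^{-kn}
\]
up to constants (the factors coming respectively from the cross-sectional measure, passage length times the $|x-y|^{-(n+\delta p)}$ kernel reorganised, the gradient size $\asymp 2^{k}\cdot 2^{-k(\lambda-n)/q}$ raised to the $p$, and the extra $2^{-kn}$ from integrating over $x$). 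Summing over the $\asymp 2^{\lambda k}$ cubes in $W_k$ gives the total numerator $N_k$; I would track the exponent of $2^{-k}$ carefully. Third, compute the denominator lower bound: each room $R(Q)$, $Q\in W_k$, has measure $\asymp 2^{-kn}$ and $u_k=2^{-k(\lambda-n)/q}$ on it, so $\int_{G_s}|u_k-(u_k)_{G_s}|^q \gtrsim \sharp W_k\cdot 2^{-kn}\cdot 2^{-k(\lambda-n)}\gtrsim 2^{\lambda k}\cdot 2^{-kn}\cdot 2^{-k(\lambda-n)}=1$, once one checks $|(u_k)_{G_s}|$ is bounded away from $2^{-k(\lambda-n)/q}$ for large $k$. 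Fourth, compare: the Poincaré inequality would force $1\lesssim N_k^{q/p}$; taking logarithms, divide by $k$ and let $k\to\infty$ along the subsequence, and one finds exactly the condition that $N_k\not\to 0$, which rearranges to the displayed inequality \eqref{rels}. So if \eqref{rels} holds with strict inequality the ratio blows up; the borderline case $\ge$ is handled by replacing a single generation with a slowly growing combination of generations (a standard $\ell^2$-of-logarithmically-many-blocks trick) to destroy any would-be uniform constant.

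\textbf{Main obstacle.} The routine but error-prone part is the bookkeeping of exponents in the numerator estimate: one must correctly account for the anisotropy of the passages (cross-section scaling like $2^{-ks}$ in $n-1$ directions, length $2^{-k}$ in one direction), the normalisation $\ell(Q)^{(\lambda-n)/q}$ in \eqref{values}, and the $\delta$-dependence from the fractional kernel, and then confirm that the resulting exponent of $2^{-k}$ is precisely $s(p-1)\cdot(\text{something})$ matching \eqref{rels} after dividing through. The genuinely substantive point — and the one I would treat most carefully — is the decoupling: I must show that \emph{cross terms} between $u^{A_s(Q)}$ and $u^{A_s(Q')}$ for $Q\neq Q'$ contribute nothing to the \emph{localised} fractional seminorm, and this is exactly where Remark~\ref{loc_rem} and the containment $P_s(Q)\subset\frac12 Q$ are used: any pair $(x,y)$ with $y\in B^n(x,\tau\dist(x,\partial G_s))$ and $u_k(x)\neq u_k(y)$ lies inside one passage $P_s(Q)$, so only the diagonal terms survive. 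Getting the constant $\tau\in(0,1)$ to cooperate with the $\dist(\cdot,\partial G_s)$ bound inside the passage is the one place where assumption \eqref{w_restr} is indispensable, and I would state that dependence explicitly.
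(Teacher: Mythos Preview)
Your strategy is the paper's, with two tactical differences. First, instead of arguing that $(u_k)_{G_s}$ is small compared to the room values, the paper takes the antisymmetric combination
\[
v_m=\sum_{k=1}^m\bigg(\sum_{i=1}^{M_{j(k)}}u^{A_s(Q_{j(k)}^i)}-\sum_{i=M_{j(k)}+1}^{2M_{j(k)}}u^{A_s(Q_{j(k)}^i)}\bigg),
\]
which has mean exactly zero by symmetry; your small-mean argument also works (since $\lambda<n$ forces $\sharp W_k\cdot 2^{-kn}\to 0$), but the antisymmetric trick is cleaner. Second, the paper builds the borderline case in from the start: it sums over $m$ generations $j(1)<\cdots<j(m)$ along the $\limsup$-subsequence, obtains $A_m\gtrsim m^{1/q}$ for the left side and $B_m\lesssim m^{1/p}$ for the right (under \eqref{rels} each summand of $B_m^p$ has nonpositive $2^{j(k)}$-exponent), and concludes $A_m/B_m\gtrsim m^{1/q-1/p}\to\infty$ since $q<p$. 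This is precisely your ``logarithmically-many-blocks'' remedy, executed uniformly so that the strict and equality cases need no separate treatment.

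On your flagged obstacle, the displayed exponent count is indeed off in two places. You list both $2^{-ks(n-1)}\cdot 2^{-k}=|P_s(Q)|$ and an additional $2^{-kn}$ ``from integrating over $x$''; that double-counts the $x$-integration. And the kernel contribution from the inner integral is
\[
\int_{B^n(0,\ell(Q)^s)}|y|^{-n+(1-\delta)p}\,dy\asymp \ell(Q)^{s(1-\delta)p}=2^{-ks(1-\delta)p},
\]
not $2^{k\delta p}$: the point is that for $x\in P_s(Q)$ one has $\dist(x,\partial G_s)\lesssim \ell(Q)^s$, not $\ell(Q)$, so the $s$ enters the kernel factor as well. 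With these corrections the single-cube energy is $\mathcal I_Q\lesssim \ell(Q)^{p(\lambda-n)/q-p+s(n-1)+1+s(1-\delta)p}$, and after multiplying by $\sharp W_k\asymp 2^{\lambda k}$ the resulting exponent of $2^{-k}$ is nonnegative exactly when \eqref{rels} holds. The decoupling via Remark~\ref{loc_rem} and the role of \eqref{w_restr} you describe are exactly as in the paper.
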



\begin{proof}
The fact
\[\mathrm{dim}_{\mathcal{M}}(\partial G_s)=\mathrm{dim}_{\mathcal{M}}(\partial G)=\lambda\]
is from \cite[Proposition 5.11]{HH-SV}. 
By \cite[Proposition 5.16]{HH-SV}, the domain
$G_s$ is an $s$-John domain.
Hence, it remains
to prove the failure of the fractional Poincar\'e inequality.

Let us choose
$k_0\in\N$ such that
\begin{equation*}\label{number}
\limsup_{k\to \infty} 2^{-\lambda(k-k_0)}\cdot \sharp  W_k>2.
\end{equation*}
This allows us to 
choose indices 
$j(k)$, $k\in \N$, inductively such that
\[\max\{k_0,-\log_2\ell(Q_0)\}< j(1)<j(2)<\dotsb\]
and
$\sharp W_{j(k)} \ge 2\cdot 2^{\lambda (j(k)-k_0)}$ for every $k\in \N$.
Let us write $M_j:=2^{[\lambda(j-k_0)]}$, where $[\lambda(j-k_0)]$ means the integer part 
of $\lambda(j-k_0)$, and let us
choose cubes \[Q_{j(k)}^1,\ldots,Q_{j(k)}^{2M_{j(k)}}\in  W_{j(k)}\setminus \{Q_0\}.\]

For every $m\in\N$ we define
\[
v_m := \sum_{k=1}^m \bigg(\sum_{i=1}^{M_{j(k)}} u^{A_s(Q_{j(k)}^i)}-\sum_{i=M_{j(k)}+1}^{2M_{j(k)}} u^{A_s(Q_{j(k)}^i)}\bigg).
\]
Note that
$(v_m)_{G_s}=0$ and 
\begin{align*}
A_m:&=\bigg(\int_{G_s} |v_m - (v_m)_{G_s}|^q\bigg)^{1/q}
= \bigg(\sum_{k=1}^m\sum_{i=1}^{2M_{j(k)}} \int_{G_s}|u^{A_s(Q_{j(k)}^i)}|^q\bigg)^{1/q}
\\&\ge 
\Big(m\cdot2\cdot 2^{\lambda(j(k)-k_0)-1}\cdot 
2^{-j(k)(\lambda-n)}
\cdot
4^{-n}\cdot 2^{-j(k)n}\Big)^{1/q} = 
c_{n,q,\lambda,k_0}m^{1/q}.
\end{align*}

Next we estimate the right hand side of \eqref{fractionalqp} with $u=v_m$.
We write
\[
G_s(x):=B^n(x,\dist(x,\partial G_s))\subset G_s\qquad \text{ for } x\in G_s.
\]
Remark \ref{loc_rem} yields: if
$x\in G_s$ and $y\in G_s(x)$ are such that
$|v_m(x)-v_m(y)|\not=0$, then
$x,y\in P_s(Q)$ for some Whitney cube $Q\in W_G$. 
By using this we obtain
\begin{align*}
B_m:&=\bigg(\int_{G_s} \int_{G_s(x)} \frac{|v_m(x)-v_m(y)|^p}{|x-y|^{n+\delta p}}\,dy\,dx\bigg)^{1/p}\\
&=\bigg(\sum_{Q\in W_G}  \int_{Q\cap G_s}\int_{G_s(x)}\frac{|v_m(x)-v_m(y)|^p}{|x-y|^{n+\delta p}}\,dy\,dx\bigg)^{1/p}\\
&=\bigg(\sum_{Q\in W_G}  \int_{P_s(Q)}\int_{P_s(Q)\cap G_s(x)}\frac{|v_m(x)-v_m(y)|^p}{|x-y|^{n+\delta p}}\,dy\,dx\bigg)^{1/p}\\
&=\bigg(\sum_{k=1}^m\sum_{i=1}^{2M_{j(k)}}  \int_{P_s(Q_{j(k)}^i)}\int_{P_s(Q_{j(k)}^i)\cap G_s(x)}\frac{|u^{A_s(Q_{j(k)}^i)}(x)-u^{A_s(Q_{j(k)}^i)}(y)|^p}{|x-y|^{n+\delta p}}\,dy\,dx\bigg)^{1/p}.
\end{align*}

Let us fix a cube $R=Q_{j(k)}^i$,  where
$k\in \{1,\ldots,m\}$ and $i\in \{1,2,\ldots,2M_{j(k)}\}$.
By \eqref{gradient}
\[
|u^{A_s(R)}(x)-u^{A_s(R)}(y)|\le  16\ell(R)^{(\lambda-n)/q-1}|x-y|,\qquad x,y\in P_s(R).
\]
Hence, 
\begin{align*}
\mathcal{I}_R:=&\int_{P_s(R)}\int_{P_s(R)\cap G_s(x)}\frac{|u^{A_s(R)}(x)-u^{A_s(R)}(y)|^p}{|x-y|^{n+\delta p}}\,dy\,dx\\
&\lesssim \ell(R)^{p(\lambda-n)/q-p}\int_{P_s(R)}\int_{P_s(R)\cap G_s(x)} |x-y|^{-n+(1-\delta)p}\,dy\,dx.
\end{align*}
Note that $G_s(x)\subset B^n(x,\ell(R)^s)$ if $x\in P_s(R)$. Thus,
\[
\int_{P_s(R)\cap G_s(x)} |x-y|^{-n+(1-\delta)p}dy\le \int_{B^n(0,\ell(R)^s)} |y|^{-n+(1-\delta)p}dy
\lesssim \ell(R)^{s(1-\delta)p},
\]
and it follows that
\begin{align*}
\mathcal{I}_R&\lesssim \ell(R)^{p(\lambda-n)/q-p}|P_s(R)|\ell(R)^{s(1-\delta)p}\\&= \ell(R)^{p(\lambda-n)/q-p+s(n-1)+1+s(1-\delta)p}=2^{-j(k)(p(\lambda-n)/q-p+s(n-1)+1+s(1-\delta)p)}.
\end{align*}
These estimates and inequality \eqref{rels} yield
\begin{align*}
B_m&\lesssim \bigg(\sum_{k=1}^m
2^{\lambda j(k)}2^{-j(k)(p(\lambda-n)/q-p+s(n-1)+1+s(1-\delta)p)}\bigg)^{1/p}\lesssim m^{1/p}.
\end{align*}

By using the assumption $q<p$ we obtain
\[
\frac{A_m}{B_m} \ge c_{n,s,p,q,k_0,\lambda,\delta}m^{1/q-1/p}\xrightarrow{m\to \infty} \infty.
\]
Hence, the domain $G_s$ does not support the fractional $(q,p)$-Poincar\'e inequality \eqref{fractionalqp} for any $\tau\in (0,1)$.
\end{proof}

The following theorem shows the
sharpness of Theorem \ref{sharp}.

\begin{theorem}\label{1p_counter}
Let $s>1$, $p\in (1,\infty)$, $\lambda\in [n-1,n)$, and let $\delta,\tau\in (0,1)$.
Suppose that
\[
s<\frac{n+1-\lambda}{1-\delta},\qquad p\le \frac{s(n-1)-\lambda+1}{n-s(1-\delta)-\lambda+1}.
\]
Then, there is an $s$-John domain $G_s$ in $\R^n$
with the following properties:
$\mathrm{dim}_{\mathcal{M}}(\partial G_s) = \lambda$ and 
$G_s$ does not support 
the fractional $(1,p)$-Poincar\'e inequality \eqref{fractionalqp}.
\end{theorem}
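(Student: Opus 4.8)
The plan is to obtain Theorem \ref{1p_counter} as a direct consequence of Proposition \ref{sharp_counter}, applied with the choice $q=1$. The only input that Proposition \ref{sharp_counter} requires is a base $1$-John domain with a prescribed boundary dimension together with a matching lower bound on the number of Whitney cubes per generation, so the first task is to exhibit such a domain.

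First I would fix a bounded $1$-John domain $G$ in $\R^n$ with $\mathrm{dim}_{\mathcal{M}}(\partial G)=\lambda$ and
\[
\limsup_{k\to\infty} 2^{-\lambda k}\,\sharp W_k>0 .
\]
For $\lambda=n-1$ one may take $G$ to be a cube, for which $\sharp W_k\simeq 2^{(n-1)k}$; for $\lambda\in(n-1,n)$ one uses a self-similar construction of the type in \cite{HH-SV}, arranged so that the Whitney cube count attains the maximal rate $2^{\lambda k}$ along a subsequence of generations. Replacing $G$ by $2^{-m}G$ for a sufficiently large integer $m$ leaves the $1$-John property and $\mathrm{dim}_{\mathcal{M}}(\partial G)$ unchanged, merely shifts the generation index (so the $\limsup$ remains positive), and makes condition \eqref{w_restr} hold. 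Thus $G$ and $s$ meet the hypotheses of Proposition \ref{sharp_counter}.

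Next I would apply Proposition \ref{sharp_counter} with $q=1$. The requirement $1\le q<p<\infty$ reduces to $1<p$, which is assumed. It remains to verify inequality \eqref{rels} for $q=1$, namely
\[
\frac{(p-1)(\lambda-n)}{p}+\frac{(s-1)(n-1)}{p}\ge 1-s(1-\delta);
\]
multiplying by $p$ and rearranging, and using the identity $s(n-1)-\lambda+1=(n-\lambda)+(s-1)(n-1)$ together with $n-s(1-\delta)-\lambda+1>0$ (which holds since $s<(n+1-\lambda)/(1-\delta)$), this is seen to be precisely the standing hypothesis $p\le (s(n-1)-\lambda+1)/(n-s(1-\delta)-\lambda+1)$. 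Proposition \ref{sharp_counter} then produces the $s$-version $G_s$, which is an $s$-John domain with $\mathrm{dim}_{\mathcal{M}}(\partial G_s)=\lambda$ and which fails the fractional $(1,p)$-Poincar\'e inequality \eqref{fractionalqp} for every $\tau\in(0,1)$; this is exactly the assertion of the theorem.

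The arithmetic reduction above is routine, so I expect the only genuine obstacle to be the construction of the base domain $G$: one must produce a $1$-John domain whose boundary has upper Minkowski dimension \emph{equal} to $\lambda$ while simultaneously realizing the maximal Whitney-cube growth $2^{\lambda k}$ along a subsequence, so that the $\limsup$ condition of Proposition \ref{sharp_counter} is satisfied without contradicting the Minkowski-dimension bound. This is handled by an explicit iterated geometric construction, or by invoking the corresponding examples from \cite{HH-SV}.
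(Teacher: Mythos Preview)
Your proposal is correct and follows essentially the same route as the paper: invoke the existence of a base $1$-John domain $G$ with $\dim_{\mathcal{M}}(\partial G)=\lambda$ and $\limsup_{k\to\infty}2^{-\lambda k}\sharp W_k>0$ (the paper cites \cite[Proposition~5.2]{HH-SV} directly), rescale so that \eqref{w_restr} holds, and then apply Proposition~\ref{sharp_counter} with $q=1$. Your explicit verification that \eqref{rels} with $q=1$ is algebraically equivalent to the hypothesis on $p$ (using $n-s(1-\delta)-\lambda+1>0$) is a detail the paper leaves implicit, but it is correct and the argument is complete.
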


\begin{proof}
By \cite[Proposition 5.2]{HH-SV} there
is a $1$-John domain $G$ in $\R^n$
such that
$\mathrm{dim}_{\mathcal{M}}(\partial G)=\lambda$ and
$\limsup_{k\to\infty} 2^{-\lambda k} \cdot \sharp W_k >0$.
By scaling  we may also assume that \eqref{w_restr} holds.
Hence, by Proposition \ref{sharp_counter} the $s$-version
$G_s$ has required properties.
\end{proof}

\bibliographystyle{amsalpha}

\end{document}